\documentclass[11pt]{article}
\usepackage[T1]{fontenc}
\usepackage{lmodern}
\usepackage{microtype}
\usepackage{amsmath,amssymb,amsthm}
\usepackage{needspace}
\usepackage[a4paper,margin=1in]{geometry}
\usepackage{xcolor}
\usepackage{hyperref}

\hypersetup{
  colorlinks=true,
  linkcolor=blue!50!black,
  citecolor=blue!50!black,
  urlcolor=blue!50!black,
  pdftitle={A nowhere-zero point for several linear mappings simultaneously},
  pdfauthor={Amir Jafari},
  pdfkeywords={nowhere-zero vector, simultaneous linear mappings, Frobenius powers, forbidden values, matroid characteristic polynomial, blocking set, Alon-Jaeger-Tarsi conjecture}
}

\newtheorem{theorem}{Theorem}[section]
\newtheorem{lemma}[theorem]{Lemma}
\newtheorem{proposition}[theorem]{Proposition}
\newtheorem{corollary}[theorem]{Corollary}
\newtheorem{conjecture}[theorem]{Conjecture}
\newtheorem{question}[theorem]{Question}
\newcommand{\F}{\mathbb F}
\newcommand{\rk}{\operatorname{rk}}
\newcommand{\si}{\operatorname{si}}
\newcommand{\PG}{\operatorname{PG}}
\newcommand{\rmax}{r_{\max}}

\title{A nowhere-zero point for several linear mappings simultaneously}
\author{Amir Jafari\\[3pt]
  \small New Uzbekistan University\\[-1pt]
  \small\href{mailto:a.jafari@newuu.uz}{\texttt{a.jafari@newuu.uz}}}
\date{}

\begin{document}
\maketitle

\begin{abstract}
Let $q=p^k$, and let $A_1,\ldots,A_{r-1}$ be invertible $n\times n$
matrices over $\F_q$.  We prove that, if $k\ge r$, there is a vector $x$ for
which
\[
 x,A_1x,\ldots,A_{r-1}x
\]
are all nowhere zero.  For $r=2$ this recovers the theorem of Alon and Tarsi
over nonprime finite fields.  The proof tracks one monomial in the product
of the coordinate forms.  Frobenius powers keep every exponent below $p^r$,
and finite-field polynomial nonvanishing supplies the required vector.  The
same method treats rectangular matrices with independent rows and prescribed
forbidden values.

The method also gives an extension-degree criterion for representable
matroids and clarifies an unpublished prime-field conjecture of
M.~J.~Moghaddamzadeh.  Projective-geometric examples explain why the
analogous field-size statement fails over proper extensions and, translated
back to matrices, give lower bounds for the large-field problem.
\end{abstract}

\medskip
\noindent\textbf{Keywords.}
Nowhere-zero vector; simultaneous linear mappings; Frobenius powers;
forbidden values; matroid characteristic polynomial; blocking set;
Alon--Tarsi problem.

\medskip
\noindent\textbf{2020 Mathematics Subject Classification.}
Primary 05B35; Secondary 11T06, 13A35, 51E21.

\section{Introduction}

Call a vector \emph{nowhere zero} if none of its coordinates vanishes.  Alon
and Tarsi asked whether, for an invertible matrix $A$ over a finite field
$\F_q$, there must be a vector $x$ such that both $x$ and $Ax$ are nowhere
zero.  They conjectured that the answer is yes for every $q\ge4$ and proved
it whenever $q$ is not prime \cite{AT1989}.

We ask for one vector that is nowhere zero and remains nowhere zero under
several invertible mappings.  Given
$A_1,\ldots,A_{r-1}\in\operatorname{GL}_n(\F_q)$, we seek an
$x\in\F_q^n$ such that $x,A_1x,\ldots,A_{r-1}x$ are all nowhere zero.  Our
first theorem gives a condition depending only on the extension degree of
the field.

\begin{theorem}[Simultaneous nowhere-zero theorem]
\label{thm:simultaneous}
Let $q=p^k$, where $p$ is prime, let $r\ge2$ and $n\ge1$, and let
$A_1,\ldots,A_{r-1}\in\operatorname{GL}_n(\F_q)$.  If $k\ge r$, then
there is an $x\in\F_q^n$ such that
\[
 x,A_1x,\ldots,A_{r-1}x
\]
are all nowhere zero.
\end{theorem}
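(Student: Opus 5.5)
The plan is to use the Combinatorial Nullstellensatz (polynomial nonvanishing on a grid) over $\F_q$. I would attach Frobenius exponents to the levels so that every exponent stays below $p^r\le p^k=q$. Put $A_0=I$ and write $L_{j,i}(x)=(A_jx)_i$ for the $i$-th coordinate form of $A_jx$. Consider
\[
 F(x)=\prod_{j=0}^{r-1}\prod_{i=1}^{n}L_{j,i}(x)^{p^{j}} .
\]
A vector $x$ with $F(x)\neq0$ is exactly what the theorem asks for. Since $L_{j,i}(x)^{p^j}=\sum_l (a^{(j)}_{il})^{p^j}x_l^{p^j}$, level $j$ is a product of $n$ linear forms in the variables $x_1^{p^j},\ldots,x_n^{p^j}$. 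The total degree is $n m$, where $m=1+p+\cdots+p^{r-1}<p^r\le q$. I would target the monomial $x_1^{m}\cdots x_n^{m}$. Its degree equals $\deg F$, and each exponent is $m\le q-1$. So by the Nullstellensatz with $S_l=\F_q$, a nonzero coefficient gives a point of $\F_q^n$ where $F$ does not vanish.

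Next I would compute this coefficient. A term of the expansion is a choice, at each level $j$, of a map $\{1,\ldots,n\}\to\{1,\ldots,n\}$, say $i\mapsto l$, with multiplicities $c_{lj}$. These satisfy $\sum_l c_{lj}=n$ and, for the target monomial, $\sum_j c_{lj}p^j=m$ for every $l$. The base-$p$ digits of $m$ are all $1$, but the $c_{lj}$ can exceed $p-1$, so carries must be controlled. Summing the digit condition over $l$, with the level sums fixed at $n$, should force $c_{lj}=1$ for all $l,j$ whenever $n<p$. In general I expect the admissible patterns to be a small, explicit family. When all $c_{lj}=1$, the coefficient factors level by level as $\prod_j \operatorname{per}(A_j)^{p^j}$.

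The main obstacle is that a permanent can vanish in characteristic $p$ even for an invertible matrix. So the naive choice above can give a zero coefficient, and I would have to exploit the freedom in the construction. My first attempt would use the fact that nowhere-zeroness of $A_jx$ is unchanged when $A_j$ is replaced by $D_jA_j$ with $D_j$ diagonal and invertible, and that one may substitute $x=Dy$ for diagonal $D$. The coefficient then becomes a polynomial in the diagonal entries. I would aim to show that this polynomial is nonzero, using $\det A_j\neq0$ to find a nonvanishing permutation term that survives. Failing that, I would redistribute the Frobenius exponents among the levels: any assignment of distinct powers $p^{\pi(j)}$, or even different powers for different rows, keeps all exponents below $p^r$. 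Among these choices I would look for one whose coefficient is a product of determinants rather than permanents. The hypothesis $k\ge r$ is what supplies $r$ distinct Frobenius twists, so I expect this step to be where it is used essentially.
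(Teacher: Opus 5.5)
Your proposal has a genuine gap, at exactly the step you flag as the main obstacle. Nothing ensures that the coefficient of $x_1^m\cdots x_n^m$ in $F$ is nonzero, and it can vanish. Take $p=3$, $q=9$, $r=n=2$ and $A_1=\begin{pmatrix}1&1\\1&-1\end{pmatrix}$. Its determinant is $-2=1\neq0$ in characteristic $3$, but its permanent is $0$. Then
\[
 F=x_1x_2\bigl((x_1+x_2)(x_1-x_2)\bigr)^3=x_1x_2(x_1^6-x_2^6)=x_1^7x_2-x_1x_2^7,
\]
so the coefficient of $x_1^4x_2^4$ is $0$.

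Neither of the repairs you suggest removes this obstruction. Replacing $A_j$ by $D_jA_j$ multiplies $F$ by the nonzero constant $\prod_j(\det D_j)^{p^j}$. Substituting $x=Dy$ multiplies the coefficient of $y_1^m\cdots y_n^m$ by $(\det D)^m$. So a zero coefficient stays zero. Permuting the Frobenius exponents among the levels leaves the target monomial unchanged, and for $n<p$ it yields $\prod_j\operatorname{per}(A_j)^{p^{\pi(j)}}$, again zero. The row-by-row reassignment is not specified, and you give no mechanism by which some choice would produce determinants for every invertible $A_j$. In the example, $F$ does contain a usable monomial, $x_1^7x_2$; the failure comes from committing to one monomial in advance.

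The missing idea is to prove that some admissible monomial exists without identifying it. Work with the untwisted product $P=\prod_{j,i}L_{j,i}$ and the ideal $J_s=(X_1^{p^s},\ldots,X_n^{p^s})$. A polynomial lies outside $J_s$ exactly when it has a nonzero monomial with all exponents below $p^s$. Because $\bigl(\sum_j t_{ij}X_j\bigr)^{p^s}=\sum_j t_{ij}^{p^s}X_j^{p^s}$, every invertible linear substitution maps $J_s$ onto itself.

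Now induct on the number of bases:
\begin{itemize}
\item Change coordinates so that one basis becomes $X_1,\ldots,X_n$.
\item Write the product as $X_1\cdots X_n\cdot G$, where $G$ is a product of $s-1$ bases.
\item By induction, $G$ has a monomial with all exponents below $p^{s-1}$.
\item Multiplying by $X_1\cdots X_n$ keeps every exponent at most $p^{s-1}<p^s$.
\item Invariance of $J_s$ carries this back to the original coordinates.
\end{itemize}
Since $P$ is homogeneous of degree $rn$, the Combinatorial Nullstellensatz applies as soon as $p^r\le q$. That inequality is the only role of $k\ge r$; it is not there to supply distinct Frobenius twists. The same invariance argument would also show that your twisted $F$ lies outside $J_r$, but the specific monomial $x_1^m\cdots x_n^m$ cannot be rescued in general.
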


Here the identity is counted among the $r$ mappings.  Thus $r=2$ is exactly
Alon and Tarsi's theorem over nonprime finite fields.  Equivalently, for any
$B_1,\ldots,B_r\in\operatorname{GL}_n(\F_q)$ there is a $z$ for which every
$B_iz$ is nowhere zero.  Indeed, apply the theorem to
$B_2B_1^{-1},\ldots,B_rB_1^{-1}$ and put $z=B_1^{-1}x$.

The theorem is encoded by a simple polynomial.  Put $A_0=I_n$ and write
$\ell_{ij}(X)=(A_iX)_j$.  Then
\begin{equation}\label{eq:intro-polynomial}
 P(X)=\prod_{i=0}^{r-1}\prod_{j=1}^n\ell_{ij}(X)
\end{equation}
satisfies $P(x)\ne0$ precisely when all the displayed vectors are nowhere
zero.  For each $i$, the forms
$\ell_{i1},\ldots,\ell_{in}$ form a basis of $(\F_q^n)^*$.  The proof
therefore reduces to one algebraic fact: in characteristic $p$, the product
of $r$ bases of linear forms contains a nonzero monomial in which every
exponent is smaller than $p^r$.  When $r\le k$, these exponents are smaller
than $q=p^k$, and the coefficient form of the Combinatorial Nullstellensatz
gives the required point.

The extension-degree result and the known large-field results concern
different regimes.  Nagy and Pach proved the two-mapping statement over every
finite field of size greater than $61$, except possibly $79$
\cite{NagyPach}.  More generally, Nagy, Pach, and Tomon proved that, for each
fixed $r$, the simultaneous conclusion holds over every sufficiently large
finite field \cite[Theorem~1.7]{NagyPachTomon}.  Their theorem imposes no
condition on the extension degree.  Theorem~\ref{thm:simultaneous} instead
applies to every $\F_{p^k}$ with $k\ge r$, independently of $p$ and $n$.

The same argument applies to rectangular matrices with independent rows and
allows a prescribed set of forbidden values in every output coordinate.  We
prove this after the bounded-monomial estimate.

The proof uses only the independence relations among the coordinate forms.
Matroid language isolates this information.  It leads to the unpublished
prime-field conjecture of M.~J.~Moghaddamzadeh, exact obstruction thresholds in low rank, and
higher-rank bounds obtained by projection.  Embedded projective
subgeometries explain why the corresponding field-size statement fails over
proper extensions and, at the end, return us to the several-mapping problem.

\section{A bounded monomial}\label{sec:frobenius}

The polynomial \eqref{eq:intro-polynomial} is a product of bases of linear
forms.  The only issue is to find one monomial whose exponents remain below
the field size.  Frobenius powers make this property invariant under every
change of coordinates.

Fix $n\ge1$, and let $K$ be a field of characteristic $p$.  For $s\ge1$,
let
\begin{equation}\label{eq:Js}
 J_s=(X_1^{p^s},\ldots,X_n^{p^s})
 \subseteq K[X_1,\ldots,X_n].
\end{equation}
This is the $p^s$-th Frobenius power of the coordinate ideal.
These ideals form the descending chain $J_1\supseteq J_2\supseteq\cdots$.
Since $J_s$ is a monomial ideal,
\begin{equation}\label{eq:bounded-monomial}
 f\notin J_s
 \quad\Longleftrightarrow\quad
 f\text{ has a nonzero monomial }X^\alpha
 \text{ with }\alpha_i<p^s\text{ for every }i.
\end{equation}

\begin{lemma}[Frobenius stability]\label{lem:stable}
If $T$ is any $n\times n$ matrix over $K$, the substitution
$\Phi_T(f)(X)=f(TX)$ satisfies $\Phi_T(J_s)\subseteq J_s$.  If $T$ is
invertible, then $\Phi_T(J_s)=J_s$.
\end{lemma}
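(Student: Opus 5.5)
The plan is to exploit two facts: $\Phi_T$ is a ring endomorphism of $K[X_1,\ldots,X_n]$, and in characteristic $p$ the $p^s$-th power map is additive. First I would note that $\Phi_T$ is a $K$-algebra homomorphism. It is determined by the images of the variables, $X_i\mapsto \ell_i(X)=\sum_j T_{ij}X_j$, and it preserves sums and products.

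Since $J_s$ is generated by $X_1^{p^s},\ldots,X_n^{p^s}$, it suffices to show $\Phi_T(X_i^{p^s})\in J_s$ for each $i$. This is because any element of $J_s$ has the form $\sum_i g_iX_i^{p^s}$, and it maps to $\sum_i \Phi_T(g_i)\Phi_T(X_i^{p^s})$, which stays in the ideal once the generators do. The key step is the Frobenius identity in characteristic $p$:
\[
\Phi_T(X_i^{p^s})=\Bigl(\sum_j T_{ij}X_j\Bigr)^{p^s}=\sum_j T_{ij}^{p^s}X_j^{p^s},
\]
using that $u\mapsto u^{p}$ is a ring endomorphism of $K[X]$, iterated $s$ times. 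The right-hand side is visibly a $K$-linear combination of the generators of $J_s$, so $\Phi_T(J_s)\subseteq J_s$.

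For the invertible case, I would use functoriality: $\Phi_{T}\circ\Phi_{S}=\Phi_{ST}$, since $\Phi_T(\Phi_S(f))(X)=\Phi_S(f)(TX)=f(STX)$. If $T$ is invertible, then $\Phi_{T^{-1}}$ is a two-sided inverse of $\Phi_T$. Applying the first part to $T^{-1}$ gives $J_s=\Phi_T(\Phi_{T^{-1}}(J_s))\subseteq\Phi_T(J_s)$, hence equality.

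There is no real obstacle here. The only point requiring care is the order of composition in $\Phi_T\circ\Phi_S=\Phi_{ST}$, and that does not matter for the inverse argument.
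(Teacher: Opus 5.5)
Your proposal is correct and follows essentially the same route as the paper: you show that iterated Frobenius sends each generator $X_i^{p^s}$ to $\sum_j T_{ij}^{p^s}X_j^{p^s}\in J_s$, and you get the reverse inclusion by applying the result to $T^{-1}$. Your explicit check that $\Phi_T\circ\Phi_S=\Phi_{ST}$ is a slightly more careful version of the paper's one-line appeal to $T^{-1}$.
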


\begin{proof}
Write the $i$th coordinate of $TX$ as
$\ell_i(X)=\sum_jt_{ij}X_j$.  Iterated Frobenius gives
\[
 \Phi_T(X_i^{p^s})=\ell_i(X)^{p^s}
 =\sum_jt_{ij}^{p^s}X_j^{p^s}\in J_s.
\]
Thus every generator of $J_s$ is carried into $J_s$, which proves the
inclusion.  If $T$ is invertible, applying the same inclusion to $T^{-1}$
gives the reverse inclusion.  This proves the lemma.
\end{proof}

For a basis $\mathcal B=\{\ell_1,\ldots,\ell_n\}$ of $(K^n)^*$, write
$P_{\mathcal B}=\ell_1\cdots\ell_n$.

\begin{proposition}[Products of bases]\label{prop:bases}
For $s\ge1$ and bases $\mathcal B_1,\ldots,\mathcal B_s$ of $(K^n)^*$,
\[
  \prod_{i=1}^sP_{\mathcal B_i}\notin J_s.
\]
Equivalently, the product has a nonzero monomial all of whose exponents are
smaller than $p^s$.
\end{proposition}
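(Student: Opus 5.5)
The plan is induction on $s$, using Lemma~\ref{lem:stable} to move one basis into standard position at each step, so that only a multiplication by $X_1\cdots X_n$ remains to be controlled.

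\emph{Base case $s=1$.} Let $\mathcal B_1=\{\ell_1,\ldots,\ell_n\}$ and let $T$ be the invertible matrix whose $i$th row gives $\ell_i$, so that $\ell_i(X)=(TX)_i$. Then $P_{\mathcal B_1}=\Phi_T(X_1\cdots X_n)$. The monomial $X_1\cdots X_n$ has all exponents equal to $1<p$, so by \eqref{eq:bounded-monomial} it is not in $J_1$. Since $T$ is invertible, Lemma~\ref{lem:stable} gives $\Phi_T(J_1)=J_1$, and $\Phi_T$ is a bijection of $K[X]$. Hence $P_{\mathcal B_1}\notin J_1$.

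\emph{Inductive step.} Assume the statement for $s-1\ge1$ and take bases $\mathcal B_1,\ldots,\mathcal B_s$. Let $T$ be the invertible matrix attached to $\mathcal B_s$ as above. Apply $\Phi_{T^{-1}}$ to the whole product. Each $\Phi_{T^{-1}}(P_{\mathcal B_i})$ is again $P_{\mathcal B_i'}$ for the basis $\mathcal B_i'=\{\ell\circ T^{-1}:\ell\in\mathcal B_i\}$. The last factor becomes $X_1\cdots X_n$. Thus
\[
\Phi_{T^{-1}}\Bigl(\prod_{i=1}^sP_{\mathcal B_i}\Bigr)=g\cdot X_1\cdots X_n,
\qquad g=\prod_{i=1}^{s-1}P_{\mathcal B_i'}.
\]
By Lemma~\ref{lem:stable}, $\Phi_{T^{-1}}(J_s)=J_s$. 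It therefore suffices to show $gX_1\cdots X_n\notin J_s$.

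By the induction hypothesis, $g\notin J_{s-1}$. So by \eqref{eq:bounded-monomial}, $g$ has a monomial $X^\alpha$ with nonzero coefficient $c$ and every $\alpha_i<p^{s-1}$. Multiplication by $X_1\cdots X_n$ maps monomials injectively to monomials, so there is no cancellation. Hence $X^{\alpha+(1,\ldots,1)}$ occurs in $gX_1\cdots X_n$ with the same coefficient $c\ne0$. Its exponents satisfy $\alpha_i+1\le p^{s-1}<p^s$. By \eqref{eq:bounded-monomial} again, $gX_1\cdots X_n\notin J_s$, which completes the induction.

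The step needing the most care is the change of coordinates. One must check that $\Phi_{T^{-1}}$ really carries a product of bases to a product of bases. It does, because composition with an invertible linear map sends a basis of $(K^n)^*$ to a basis, and $\Phi$ is a ring homomorphism. One must also check that membership in $J_s$ is invariant under $\Phi_{T^{-1}}$. This is exactly where Lemma~\ref{lem:stable} is used, and where characteristic $p$ enters: the $p^s$-th powers of linear forms split into sums of $p^s$-th powers of variables. Once these two facts are in place, the bookkeeping on exponents is routine.
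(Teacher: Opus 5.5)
Your proof is correct and follows essentially the paper's argument: induction on $s$, with Lemma~\ref{lem:stable} used to put one basis in coordinate form, followed by the observation that multiplying by $X_1\cdots X_n$ shifts a monomial with exponents below $p^{s-1}$ to one with exponents at most $p^{s-1}<p^s$. The only cosmetic difference is that you normalize $\mathcal B_s$ where the paper normalizes $\mathcal B_1$, and you spell out the bijectivity of $\Phi_{T^{-1}}$ a little more explicitly.
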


\begin{proof}
For $s=1$, change coordinates so that $\mathcal B_1$ is the coordinate
basis.  Its product becomes $X_1\cdots X_n\notin J_1$, and
Lemma~\ref{lem:stable} then gives the assertion in the original coordinates.

Suppose now that $s\ge2$ and that the assertion holds for $s-1$ bases.
Choose coordinates in which $\mathcal B_1$ is the coordinate basis.  The
images of $\mathcal B_2,\ldots,\mathcal B_s$ remain bases.  By induction,
their product has a monomial $X^\beta$ with $\beta_j<p^{s-1}$ for every
$j$.  Multiplication by $X_1\cdots X_n$ shifts this monomial to
$X^{\beta+(1,\ldots,1)}$ without changing its nonzero coefficient, and
\[
  \beta_j+1\le p^{s-1}<p^s.
\]
Thus the normalized product lies outside $J_s$.  Lemma~\ref{lem:stable}
gives the conclusion in the original coordinates.  This completes the
induction and proves the proposition.
\end{proof}

\begin{proof}[Proof of Theorem~\ref{thm:simultaneous}]
For $1\le i\le r-1$, let $\mathcal B_i$ be the basis of linear forms given
by the coordinates of $A_iX$.  Proposition~\ref{prop:bases} shows that
\[
 G(X):=\prod_{i=1}^{r-1}P_{\mathcal B_i}
\]
contains a monomial $cX^\beta$, with $c\ne0$ and
$\beta_j<p^{r-1}$ for every $j$.  Since
\[
 P(X)=(X_1\cdots X_n)G(X),
\]
the polynomial in \eqref{eq:intro-polynomial} contains
$cX^{\beta+(1,\ldots,1)}$.  Each of its exponents satisfies
\[
 \beta_j+1\le p^{r-1}<p^r\le p^k=q.
\]
This monomial has degree $rn=\deg P$.  The coefficient form of the
Combinatorial Nullstellensatz \cite[Theorem~1.2]{Alon1999}, applied with
every coordinate set equal to $\F_q$, gives an $x\in\F_q^n$ with
$P(x)\ne0$.  By the definition of $P$, the vectors
$x,A_1x,\ldots,A_{r-1}x$ are all nowhere zero.
\end{proof}

For $s=1$, Proposition~\ref{prop:bases} is the ideal-theoretic form of the
bounded-exponent step of Alon and Tarsi \cite[Claim~3]{AT1989}.

The prescribed-value theorem requires independent families that need not be
bases or have equal size.  The following corollary supplies this form.

\begin{corollary}[Independent-family form]\label{cor:independent-families}
Let $s\ge1$, and let $\mathcal L$ be a multiset of nonzero linear forms.  If
$\mathcal L$ can be covered by at most $s$ linearly independent subfamilies,
then
\[
  \prod_{\ell\in\mathcal L}\ell\notin J_s.
\]
\end{corollary}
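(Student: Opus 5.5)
Split $\mathcal L$ into $s$ linearly independent subfamilies and extend each one to a basis of $(K^n)^*$; then Proposition~\ref{prop:bases} applies to the extended product, and we recover $\prod_{\ell\in\mathcal L}\ell$ as a factor of it. Concretely, write $\mathcal L=\mathcal I_1\sqcup\cdots\sqcup\mathcal I_t$ as a disjoint union of linearly independent subfamilies with $t\le s$. A cover by possibly overlapping subfamilies can be made disjoint by deleting repeated occurrences, since subfamilies of independent families stay independent. If $t<s$, pad with empty subfamilies so that $t=s$. Extend each $\mathcal I_i$ to a basis $\mathcal B_i$ of $(K^n)^*$ by adjoining a set $\mathcal C_i$ of linear forms. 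This gives
\[
 \prod_{i=1}^sP_{\mathcal B_i}=\Bigl(\prod_{\ell\in\mathcal L}\ell\Bigr)\cdot C,
 \qquad C=\prod_{i=1}^s\prod_{m\in\mathcal C_i}m .
\]

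Since $J_s$ is an ideal, $\prod_{\ell\in\mathcal L}\ell\in J_s$ would force the left-hand side into $J_s$. That contradicts Proposition~\ref{prop:bases}, so $\prod_{\ell\in\mathcal L}\ell\notin J_s$.

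The only point needing care is the bookkeeping with multisets. If $\mathcal L$ contains a form with multiplicity, each occurrence must lie in a different independent subfamily, and the disjoint decomposition must respect multiplicities. Linear independence of a subfamily rules out two copies of the same form (or proportional forms) inside it, so a cover by at most $s$ independent subfamilies automatically assigns distinct copies to distinct parts after the disjointification above. No field extension or other hypothesis on $K$ is needed, because extending an independent set to a basis works over any field.
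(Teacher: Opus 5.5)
Your proof is correct and follows the paper's argument. You turn the cover into a disjoint multiset partition into independent families, complete each family to a basis, and use that $J_s$ is an ideal to transfer the conclusion of Proposition~\ref{prop:bases} to the factor $\prod_{\ell\in\mathcal L}\ell$. The only cosmetic difference is that you pad with empty families, completed to arbitrary bases, so as to use exactly $s$ bases. The paper instead works with $t\le s$ bases and then invokes $J_s\subseteq J_t$.
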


\begin{proof}
If $\mathcal L$ is empty, its product is $1$, and the assertion is immediate.
Otherwise, assign each occurrence of a form to one independent subfamily
containing it.  Thus
$\mathcal L$ is the disjoint multiset union of independent families
$\mathcal L_1,\ldots,\mathcal L_t$, where $1\le t\le s$.

Extend each $\mathcal L_i$ to a basis, and let $g$ be the product of all
added forms.  Proposition~\ref{prop:bases} gives
\[
 g\prod_{\ell\in\mathcal L}\ell\notin J_t.
\]
If $\prod_{\ell\in\mathcal L}\ell$ belonged to $J_t$, multiplying it by
$g$ would place the displayed product in $J_t$, a contradiction.  Hence the
original product does not belong to $J_t$.  Since $J_s\subseteq J_t$, it
does not belong to $J_s$.  This proves the corollary.
\end{proof}

\section{Rectangular mappings and prescribed values}
\label{sec:values}

The argument does not require square matrices.  It also permits a different
forbidden set in every output coordinate.  For example, if $s\le k$, one
may prescribe one forbidden value in each coordinate of each of $s$
matrices with linearly independent rows.  The following form allows the
number of forbidden values to vary from one matrix to another.

\begin{theorem}[Simultaneous value avoidance]
\label{thm:matrix-lists}
Let $q=p^k$ and $s\ge1$.  For $1\le i\le s$, let
$B_i\in\F_q^{m_i\times n}$ have linearly independent rows, and let
$t_i$ be a nonnegative integer.  For each $i,j$, choose a set
$T_{ij}\subseteq\F_q$ with
$|T_{ij}|\le t_i$.  If
\[
 t_1+\cdots+t_s\le k,
\]
then there is an $x\in\F_q^n$ such that
\[
 (B_ix)_j\notin T_{ij}
 \qquad(1\le i\le s,\ 1\le j\le m_i).
\]
\end{theorem}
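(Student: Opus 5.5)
We encode the conditions in one polynomial and apply the coefficient form of the Combinatorial Nullstellensatz, using Corollary~\ref{cor:independent-families} to find the needed monomial.

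For each $i,j$, let $\ell_{ij}(X)=(B_iX)_j$. Enlarge $T_{ij}$ arbitrarily to a set $T'_{ij}\subseteq\F_q$ with exactly $t_i$ elements. This is possible because $t_i\le k\le q$, and enlarging the forbidden sets only strengthens the conclusion. Put
\[
 Q(X)=\prod_{i=1}^s\prod_{j=1}^{m_i}\prod_{c\in T'_{ij}}\bigl(\ell_{ij}(X)-c\bigr).
\]
Then $Q(x)\ne0$ exactly when $(B_ix)_j\notin T'_{ij}$ for all $i,j$. The degree of $Q$ is $D=\sum_i t_im_i$. Its top-degree homogeneous part is
\[
 Q_{\mathrm{top}}=\prod_{i=1}^s\prod_{j=1}^{m_i}\ell_{ij}^{\,t_i}.
\]
Each $\ell_{ij}$ is nonzero, since the rows of $B_i$ are independent. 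A degree-$D$ monomial of $Q$ has the same coefficient as in $Q_{\mathrm{top}}$. So it suffices to find a nonzero monomial of $Q_{\mathrm{top}}$ whose exponents are all $<q$.

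Now view $Q_{\mathrm{top}}$ as the product over the multiset $\mathcal L$ in which each $\ell_{ij}$ appears $t_i$ times. For fixed $i$, the family $\{\ell_{i1},\ldots,\ell_{im_i}\}$ is linearly independent. Hence $\mathcal L$ is covered by $t_i$ copies of this family for each $i$, that is, by $t_1+\cdots+t_s\le k$ independent subfamilies in total. Indices with $t_i=0$ or $m_i=0$ contribute nothing. If every $t_i$ is zero, the statement is trivial.

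By Corollary~\ref{cor:independent-families} applied with $t_1+\cdots+t_s$ in place of $s$, we get $Q_{\mathrm{top}}\notin J_{t_1+\cdots+t_s}\supseteq J_k$. By \eqref{eq:bounded-monomial}, $Q_{\mathrm{top}}$ therefore has a nonzero monomial $cX^\alpha$ with every $\alpha_j<p^k=q$. This monomial has degree $D=\deg Q$. The coefficient form of the Combinatorial Nullstellensatz \cite[Theorem~1.2]{Alon1999}, with every coordinate set equal to $\F_q$ (of size $q>\alpha_j$), gives $x\in\F_q^n$ with $Q(x)\ne0$. This $x$ is the required vector.

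The main point to get right is the covering count. The multiplicity $t_i$ of each form must be charged to $t_i$ separate independent families, and not to a single family. This is exactly why the hypothesis is $\sum t_i\le k$ rather than $s\le k$. The rest is bookkeeping: the padding of the $T_{ij}$ and the passage from $Q$ to its top homogeneous part.
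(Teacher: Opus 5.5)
Your proof is correct and follows the paper's argument: pass to the top-degree part, cover its linear factors by $t_1+\cdots+t_s$ independent families, apply Corollary~\ref{cor:independent-families}, use $J_k\subseteq J_{t_1+\cdots+t_s}$, and finish with the Combinatorial Nullstellensatz. The only difference is bookkeeping. You pad each $T_{ij}$ to exactly $t_i$ values, so the cover consists of $t_i$ copies of the row family of $B_i$. The paper instead keeps the original sets and covers by the layered families $\{\ell_{ij}:|T_{ij}|\ge h\}$ for $1\le h\le t_i$.
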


\begin{proof}
Let $\ell_{ij}(X)=(B_iX)_j$ and put
\[
 F(X)=\prod_{i=1}^s\prod_{j=1}^{m_i}
       \prod_{c\in T_{ij}}\bigl(\ell_{ij}(X)-c\bigr).
\]
If every forbidden set is empty, then $F=1$ and there is nothing to prove.
Otherwise let $t=t_1+\cdots+t_s$.  The top-degree homogeneous part of $F$ is
\[
 H(X)=\prod_{i=1}^s\prod_{j=1}^{m_i}
       \ell_{ij}(X)^{|T_{ij}|}.
\]
For fixed $i$ and $1\le h\le t_i$, take the forms $\ell_{ij}$ for which
$|T_{ij}|\ge h$.  Each resulting family is a subset of the rows of $B_i$
and hence is independent.  Counting multiplicity, these $t$ families contain
every linear factor of $H$.  Corollary~\ref{cor:independent-families} gives
$H\notin J_t$.  Since $t\le k$ and $J_k\subseteq J_t$, we also have
$H\notin J_k$.  Hence $H$ contains a monomial $cX^\alpha$ with $c\ne0$ and
$\alpha_j<p^k=q$ for every $j$.

Because $H$ is the top-degree homogeneous part of $F$, the same monomial
occurs in $F$, with the same coefficient and with degree $\deg F$.  The
Combinatorial Nullstellensatz, applied to the grid $\F_q^n$, gives a point
$x$ with $F(x)\ne0$.  Every factor in the displayed product is then nonzero,
which is the desired conclusion.
\end{proof}

Taking $B_1=I_n$, taking $B_2,\ldots,B_r$ to be
$A_1,\ldots,A_{r-1}$, and setting every forbidden set equal to $\{0\}$
shows that Theorem~\ref{thm:matrix-lists} contains
Theorem~\ref{thm:simultaneous}.  Both proofs use only linear independence
among the coordinate forms.  Matroids provide the coordinate-free language
for this information.

\section{A conjecture about matroids}\label{sec:matroids}

Let $M$ be a finite loopless matroid, and write $\si(M)$ for its
simplification.  Let $a(M)$ be the least number of independent sets needed
to cover $E(\si(M))$; thus $a(M)=0$ when $M$ is empty.  We pass to the
simplification because proportional vectors impose the same nowhere-zero
condition.  When $E(M)\ne\varnothing$, Edmonds's matroid-partition theorem
\cite{Edmonds} gives
\begin{equation}\label{eq:edmonds}
 a(M)=
 \max_{\varnothing\ne A\subseteq E(\si(M))}
 \left\lceil\frac{|A|}{\rk_{\si(M)}(A)}\right\rceil.
\end{equation}

If $M$ has rank $n$, its characteristic polynomial is
\[
 \chi_M(t)=\sum_{A\subseteq E(M)}
 (-1)^{|A|}t^{n-\rk_M(A)}.
\]

We now identify the number of avoiding functionals with the characteristic
polynomial.

\begin{lemma}[Finite-field method]\label{lem:finite-field}
Let nonzero vectors $(v_e)_{e\in E}$ span $\F_q^n$ and represent a matroid
$M$.  The number of functionals $\varphi\in(\F_q^n)^*$ such that
$\varphi(v_e)\ne0$ for every $e$ is $\chi_M(q)$.
\end{lemma}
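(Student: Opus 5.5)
We will prove the lemma by inclusion--exclusion over the set of vectors at which a functional vanishes; this is the standard finite-field method of Crapo and Rota. For each $e\in E$, let
\[
 Z_e=\{\varphi\in(\F_q^n)^*:\varphi(v_e)=0\}.
\]
The functionals we want to count are exactly those lying in none of the $Z_e$. Inclusion--exclusion therefore gives
\[
 \#\Bigl((\F_q^n)^*\setminus\bigcup_{e\in E}Z_e\Bigr)
 =\sum_{A\subseteq E}(-1)^{|A|}\Bigl|\bigcap_{e\in A}Z_e\Bigr|,
\]
with the empty intersection interpreted as the whole dual space, which has size $q^n$.

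The key step is to compute each intersection. For $A\subseteq E$, the set $\bigcap_{e\in A}Z_e$ is the annihilator of $W_A=\operatorname{span}\{v_e:e\in A\}$. Since the vectors represent $M$, we have $\dim W_A=\rk_M(A)$. The annihilator of a subspace of dimension $d$ in $\F_q^n$ has dimension $n-d$, so
\[
 \Bigl|\bigcap_{e\in A}Z_e\Bigr|=q^{\,n-\rk_M(A)}.
\]
Substituting this into the alternating sum gives exactly $\chi_M(q)$. Here the rank $n$ of $M$ matches the ambient dimension because the vectors span $\F_q^n$.

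There is no real obstacle in this argument. The only points to check are that the rank function of the represented matroid equals the dimension of the span, which holds by definition of a representation, and that the exponent $n$ in $\chi_M$ equals $\rk M$, which uses the spanning hypothesis. The hypothesis that every $v_e$ is nonzero only ensures that $M$ is loopless, as required in this section. The formula holds regardless: a zero vector would make $Z_e$ the whole space, and both sides would then vanish.
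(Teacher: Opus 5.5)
Your proposal is correct and follows the paper's proof: you apply inclusion--exclusion to the vanishing conditions $\varphi(v_e)=0$, identify each intersection as the annihilator of $\operatorname{span}\{v_e:e\in A\}$, which has $q^{n-\rk_M(A)}$ elements, and recognize the resulting alternating sum as the subset expansion of $\chi_M(q)$. Your side remark about zero vectors is also correct: a loop forces both the count and $\chi_M$ to vanish.
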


\begin{proof}
For $A\subseteq E$, the functionals vanishing on every $v_e$ with $e\in A$
form the annihilator of $\operatorname{span}\{v_e:e\in A\}$.  There are
therefore $q^{n-\rk_M(A)}$ such functionals.  Inclusion--exclusion gives
\[
 \sum_{A\subseteq E}(-1)^{|A|}q^{n-\rk_M(A)}=\chi_M(q),
\]
the subset expansion of the characteristic polynomial.  This is the
one-functional case of the Crapo--Rota Critical Theorem.  It is also known
as the finite-field method \cite[Theorem~7.6.1]{Zaslavsky}; compare
\cite{Athanasiadis}.  This proves the lemma.
\end{proof}

The matrix problem suggests a general question: how large must the field be,
relative to $a(M)$, before $\chi_M(q)$ is forced to be positive?  An
unpublished conjecture of M.~J.~Moghaddamzadeh proposes the following answer
over prime fields.

\begin{conjecture}[Moghaddamzadeh, prime-field form]
\label{conj:prime-field}
Let $p$ be prime, and let $M$ be loopless and representable over $\F_p$.
If
\[
 p>2a(M)-1,
\]
then $\chi_M(p)>0$.
\end{conjecture}

Two hypotheses are essential: $M$ must be representable over the field at
which $\chi_M$ is evaluated, and that field must be prime.  For the first,
take $M=\PG(5,2)$.  Edmonds's formula and
the sizes of the projective subspaces give
\[
 a(M)=\left\lceil\frac{63}{6}\right\rceil=11.
\]
Yet the standard formula \cite{Oxley} gives
\[
 \chi_M(23)=\prod_{i=0}^{5}(23-2^i)<0,
 \qquad 23>2a(M)-1=21.
\]
Since $M$ has a Fano restriction, it is not representable over any field of
odd characteristic, in particular not over $\F_{23}$.

For the second, extend the standard $\F_2$-representation of
$M=\PG(3,2)$ to $\F_8$.  Then $M$ is simple and $\F_8$-representable,
\[
 a(M)=\left\lceil\frac{15}{4}\right\rceil=4,
 \qquad 8>2a(M)-1,
\]
but
\[
 \chi_M(8)=(8-1)(8-2)(8-4)(8-8)=0.
\]
This disproves the field-size assertion with $q>2a(M)-1$.  It does not
disprove the different characteristic assertion with $p>2a(M)-1$ over
$\F_{p^k}$; that problem remains open in general.

The examples above concern field size.  The simultaneous-mapping argument
instead gives a sufficient condition on the extension degree.

\begin{theorem}[Matroidal extension-degree theorem]
\label{thm:matroid-extension}
Let $M$ be a finite loopless matroid representable over $\F_{p^k}$.  If
\[
 a(M)\le k,
\]
then $\chi_M(p^k)>0$.
\end{theorem}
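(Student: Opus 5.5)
We reduce to the simplification, translate positivity of $\chi_M(q)$ into the existence of a single functional avoiding all the represented vectors via Lemma~\ref{lem:finite-field}, and then produce that functional from Corollary~\ref{cor:independent-families} and the Combinatorial Nullstellensatz, exactly as in the proof of Theorem~\ref{thm:simultaneous}.

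\emph{Reduction.} Put $q=p^k$. Since $M$ is loopless, parallel elements impose identical conditions in the count of Lemma~\ref{lem:finite-field}. Concretely, $\chi_M$ and $\chi_{\si(M)}$ agree: parallel classes contribute cancelling terms in the subset expansion, or equivalently the count of avoiding functionals is unchanged. So we may assume $M$ is simple. If $E(M)=\varnothing$ then $\chi_M(q)=q^{n}\ge1$ with $n=0$, and we are done. Otherwise choose nonzero vectors $(v_e)_{e\in E}$ in $\F_q^n$ representing $M$; restricting to their span, we may assume they span $\F_q^n$ with $n=\rk M$. By Lemma~\ref{lem:finite-field}, $\chi_M(q)$ counts the functionals $\varphi$ with $\varphi(v_e)\ne0$ for all $e$, so it suffices to exhibit one such $\varphi$.

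\emph{Polynomial.} Identify $(\F_q^n)^*$ with $\F_q^n$ via $\varphi\leftrightarrow X$, so that $\varphi(v_e)=\ell_e(X):=\sum_j (v_e)_jX_j$, a nonzero linear form. Put $F(X)=\prod_{e\in E}\ell_e(X)$. Linear independence of $\{v_e:e\in I\}$ is equivalent to linear independence of the forms $\{\ell_e:e\in I\}$, so the hypothesis $a(M)\le k$ says the multiset $\{\ell_e\}$ is covered by at most $k$ independent subfamilies. Corollary~\ref{cor:independent-families} then gives $F\notin J_k$. By \eqref{eq:bounded-monomial}, $F$ therefore contains a nonzero monomial $cX^\alpha$ with every $\alpha_j<p^k=q$.

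\emph{Conclusion.} Since $F$ is homogeneous, this monomial has degree $\deg F=|E|$. The coefficient form of the Combinatorial Nullstellensatz on the grid $\F_q^n$ yields $x$ with $F(x)\ne0$, hence $\chi_M(q)\ge1>0$.

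The only delicate point is the reduction to $\si(M)$, since $a(M)$ is defined through the simplification while the representation may contain parallel vectors. We can sidestep it entirely by noting that Corollary~\ref{cor:independent-families} allows multisets. Parallel copies of $v_e$ give forms proportional to $\ell_e$, and each copy can be placed in the same independent subfamily as a scalar multiple of its representative. A proportional form, however, cannot sit in the same independent set as its representative. So the cleaner route is to take the product only over one representative per parallel class and observe that the avoidance condition is identical for the others.
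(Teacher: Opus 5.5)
Your proposal is correct and follows the paper's proof: take one representative per parallel class, so that $a(M)$ bounds the number of independent subfamilies needed to cover the chosen forms. Then apply Corollary~\ref{cor:independent-families} to get a monomial with all exponents below $p^k$, use the Combinatorial Nullstellensatz to obtain an avoiding functional, and conclude with Lemma~\ref{lem:finite-field}. Your closing paragraph is redundant, and its first idea of keeping parallel copies in the multiset would fail, since proportional forms cannot share an independent subfamily and the cover would then need more than $a(M)$ parts; but you rightly discard it in favor of the route your main argument already takes.
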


\begin{proof}
If $E(M)=\varnothing$, then $\chi_M(p^k)=1$, so assume that
$E(M)\ne\varnothing$.
Choose a spanning representation of $M$ in $\F_{p^k}^{\rk(M)}$, and choose
one vector from each of its projective classes.
These vectors represent $\si(M)$ and can be partitioned into $a(M)$
independent families.  Regard them as linear forms on the dual space.  By
Corollary~\ref{cor:independent-families}, their product lies outside
$J_{a(M)}$.  Since $a(M)\le k$, we have $J_k\subseteq J_{a(M)}$; hence the
product lies outside $J_k$ and contains a nonzero monomial all of whose
exponents are smaller than $p^k$.

The product is homogeneous, so this monomial has its full degree.  The
Combinatorial Nullstellensatz now gives a functional over $\F_{p^k}$
that is nonzero on every chosen vector.  Parallel copies impose the same
condition, so Lemma~\ref{lem:finite-field} gives $\chi_M(p^k)>0$.
\end{proof}

Conversely, suppose a represented point set is covered by $s\ge1$ independent
sets.  Assign each point to one covering set, thereby partitioning the point
set into independent parts.  Complete each part to a basis and take the first
basis as coordinates.  The remaining bases are the row sets of $s-1$
invertible matrices.  Thus
Theorem~\ref{thm:matroid-extension} is precisely the coordinate-free form of the
simultaneous-mapping theorem.

When a representation descends to a subfield $\F_{p^j}\subseteq\F_{p^k}$,
another unpublished result of M.~J.~Moghaddamzadeh, proved by a direct
construction with scalar coordinates, gives positivity under the stronger
hypothesis $a(M)\le k/j$.
Theorem~\ref{thm:matroid-extension} needs no descent assumption.  We next study the
prime-field conjecture through the geometry of its obstructions.

\section{Blocking-set density in ranks two and three}
\label{sec:blocking}

Fix a prime power $q$.  The equality $\chi_M(q)=0$ has a geometric form: the
representing projective points form a spanning set that meets every
hyperplane.  The least density of such a set measures the failure of
$q$-avoidance.  The projective line and plane give exact obstruction
thresholds in ranks two and three.

\subsection{Density and the obstruction threshold}

Call $B\subseteq\PG(n-1,q)$ a \emph{spanning blocker} if it spans the
projective space and meets every projective hyperplane.  Let $M(B)$ be its
simple vector matroid, and define its integral density by
\begin{equation}\label{eq:density}
 \delta(B):=
 \max_{\substack{U\le\F_q^n\\B\cap\PG(U)\ne\varnothing}}
 \left\lceil\frac{|B\cap\PG(U)|}{\dim U}\right\rceil.
\end{equation}
Edmonds's formula gives $\delta(B)=a(M(B))$, while
Lemma~\ref{lem:finite-field} gives
\begin{equation}\label{eq:blocking-equivalence}
 \chi_{M(B)}(q)=0
 \quad\Longleftrightarrow\quad
 B\text{ meets every projective hyperplane}.
\end{equation}
Deleting parallel copies changes neither the avoidance condition nor the
characteristic polynomial.  Thus every $\F_q$-representable loopless
obstruction may be represented by a spanning blocker $B$, with
$a(M(B))=\delta(B)$.

For $n\ge2$, let $\rmax(q,n)$ be the largest integer $r$ such that every
loopless rank-$n$ matroid $M$ representable over $\F_q$ and satisfying
$a(M)\le r$ has $\chi_M(q)>0$.  Put
\[
 \rmax(q):=\inf_{n\ge2}\rmax(q,n).
\]
Equivalently,
\begin{equation}\label{eq:rmax-blocking}
 \rmax(q,n)+1
 =\min\{\delta(B):B\subseteq\PG(n-1,q)
       \text{ is a spanning blocker}\}.
\end{equation}

\begin{lemma}\label{lem:rmax-monotone}
For fixed $q$, the sequence $\rmax(q,n)$ is nonincreasing in $n$.
\end{lemma}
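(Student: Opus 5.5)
The plan is to use the blocking-set reformulation \eqref{eq:rmax-blocking}. It suffices to show that for every spanning blocker $B\subseteq\PG(n-1,q)$ there is a spanning blocker $B'\subseteq\PG(n,q)$ with $\delta(B')\le\delta(B)$. Taking minima over $B$ then gives $\rmax(q,n+1)+1\le\rmax(q,n)+1$.

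\emph{Construction.} Identify $\F_q^n$ with a hyperplane $H$ of $\F_q^{n+1}$, so that $B$ becomes a point set in $\PG(H)$. Choose a point $P\notin\PG(H)$ and put $B'=B\cup\{P\}$. Since $B$ spans $H$, the set $B'$ spans $\F_q^{n+1}$.

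\emph{$B'$ is a blocker.} Take a hyperplane $W$ of $\F_q^{n+1}$. If $W=H$, it contains the nonempty set $B$. Otherwise $W\cap H$ is a hyperplane of $H$, and it meets $B$ because $B$ is a blocker in $\PG(H)$. So $B'$ meets every hyperplane.

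\emph{Density bound.} Write $\delta=\delta(B)\ge1$, and let $U\le\F_q^{n+1}$ meet $B'$. We bound $|B'\cap\PG(U)|$ in two cases.
\begin{itemize}
\item If $P\notin U$, then $|B'\cap\PG(U)|=|B\cap\PG(U\cap H)|$. This is at most $\delta\dim(U\cap H)\le\delta\dim U$ by the definition of $\delta(B)$; the bound is trivial when the intersection is empty.
\item If $P\in U$, then $U\not\subseteq H$, so $\dim(U\cap H)=\dim U-1$. Hence
\[
|B'\cap\PG(U)|\le\delta(\dim U-1)+1\le\delta\dim U ,
\]
using $\delta\ge1$. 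The case $B\cap\PG(U\cap H)=\varnothing$ gives the count $1\le\delta\dim U$.
\end{itemize}
In both cases $\lceil|B'\cap\PG(U)|/\dim U\rceil\le\delta$, so $\delta(B')\le\delta(B)$.

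Equivalently, in matroid language, the construction forms the direct sum of $M$ with a coloop. This keeps $\F_q$-representability and looplessness, and satisfies $a(M\oplus U_{1,1})=\max(a(M),1)=a(M)$ for nonempty $M$. It also gives $\chi_{M\oplus U_{1,1}}(q)=(q-1)\chi_M(q)$, so the coloop preserves vanishing. I expect no real obstacle. The only care needed is the dimension count $\dim(U\cap H)=\dim U-1$ when $P\in U$, together with the harmless use of $\delta\ge1$.
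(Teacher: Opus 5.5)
Your proof is correct and follows the paper's argument: you place $B$ in a hyperplane $H$ and adjoin a point $P\notin H$, verify blocking via $W\cap H$, and bound the density by splitting on whether $P\in U$. You rightly note that only $\delta(B\cup\{P\})\le\delta(B)$ is needed (the paper also records the trivial reverse inequality), and your coloop remark $M\mapsto M\oplus U_{1,1}$ is the same construction in matroid language.
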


\begin{proof}
Embed a minimizing blocker $B$ in a hyperplane $H$ of $\PG(n,q)$ and adjoin
a point $P$ outside $H$.  Every hyperplane other than $H$ cuts $H$ in a
hyperplane met by $B$, so $B\cup\{P\}$ is a spanning blocker in $\PG(n,q)$.
Its density is at least $\delta(B)$ because it contains $B$ in $H$.  For the
reverse inequality, let $U$ be a subspace.  If $U$ contains $P$, then it
contains at most
$\delta(B)(\dim U-1)+1\le\delta(B)\dim U$ points of $B\cup\{P\}$.  If
$P\notin U$, all points of $B\cup\{P\}$ that lie in $U$ belong to
$U\cap H$, whose dimension is at most $\dim U$.  Equation
\eqref{eq:rmax-blocking} proves the lemma.
\end{proof}

\Needspace{20\baselineskip}
\subsection{Ranks two and three}

Let $\tau(q)$ be the least size of a blocking set in $\PG(2,q)$ that
contains no line.  Put $\tau(2)=+\infty$, since the Fano plane has no such
set.  This classical planar number gives the rank-three threshold exactly.

\begin{theorem}[Ranks two and three]\label{thm:rank23}
For every prime power $q$,
\begin{align}
 \rmax(q,2)&=\left\lfloor\frac q2\right\rfloor,\label{eq:rank2}\\
 \rmax(q,3)&=
 \min\left\{\left\lfloor\frac q2\right\rfloor,\,
   \left\lceil\frac{\tau(q)}3\right\rceil-1\right\}.
 \label{eq:rank3exact}
\end{align}
If $p$ is prime, then
\begin{equation}\label{eq:rank3prime}
 \rmax(p,3)=\left\lfloor\frac p2\right\rfloor.
\end{equation}
If $q$ is a square, then
\begin{equation}\label{eq:rank3square}
 \rmax(q,3)=
 \min\left\{\left\lfloor\frac q2\right\rfloor,\,
   \left\lceil\frac{q+\sqrt q+1}{3}\right\rceil-1\right\}.
\end{equation}
\end{theorem}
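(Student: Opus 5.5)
By \eqref{eq:rmax-blocking}, $\rmax(q,n)+1$ is the least density $\delta(B)$ of a spanning blocker $B\subseteq\PG(n-1,q)$. So the plan is to compute this minimum density directly in the projective line and in the projective plane, and then feed in classical bounds on blocking sets.

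\emph{Rank two.} In $\PG(1,q)$ the hyperplanes are the points, so the only blocker is the whole line, with $q+1$ points. In \eqref{eq:density}, the subspaces with $\dim U=1$ contribute $1$, and $U=\F_q^2$ contributes $\lceil (q+1)/2\rceil$. Hence
\[
 \rmax(q,2)=\left\lceil\frac{q+1}2\right\rceil-1=\left\lfloor\frac q2\right\rfloor,
\]
which one checks separately for $q$ even and $q$ odd.

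\emph{Rank three.} I split spanning blockers $B\subseteq\PG(2,q)$ according to whether they contain a line.
\begin{itemize}
\item If $B\supseteq L$ for some line $L$, then $\delta(B)\ge\lceil (q+1)/2\rceil$. This is attained by $L$ together with one point off $L$: lines other than $L$ meet that set in at most $2$ points, and the whole plane gives $\lceil (q+2)/3\rceil\le\lceil(q+1)/2\rceil$.
\item If $B$ contains no line, then $B$ is automatically spanning, since a blocking set inside a line is that line. Its density is $\max\{\lceil |B|/3\rceil,\ \max_L\lceil |B\cap L|/2\rceil\}$.
\end{itemize}
For the second case I use the elementary estimate $|B\cap L|\le |B|-q$ for any blocking set $B$ not containing the line $L$. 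To prove it, pick $Q\in L\setminus B$. Each of the other $q$ lines through $Q$ needs a point of $B$ off $L$, and these points are distinct. Now take $B$ a line-free blocking set of size $\tau(q)$. Then
\[
 \lceil(\tau-q)/2\rceil\le\lceil\tau/3\rceil \quad\text{whenever } \tau\le 3q,
\]
so its density is exactly $\lceil\tau(q)/3\rceil$, and no line-free blocker can have smaller density. The bound $\tau(q)\le 3q$ holds by the classical constructions of nontrivial blocking sets of size at most $2q-1$ for $q\ge3$; for example, a projective triangle when $q$ is odd, and a standard construction when $q$ is even. The case $q=2$ is covered by the convention $\tau(2)=+\infty$. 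Taking the minimum over both cases and subtracting $1$ gives \eqref{eq:rank3exact}.

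\emph{Specializations.} For a prime $p$ I will invoke Blokhuis's theorem $\tau(p)\ge 3(p+1)/2$. This gives
\[
 \lceil\tau/3\rceil-1\ge\lceil(p+1)/2\rceil-1=\lfloor p/2\rfloor,
\]
so the minimum in \eqref{eq:rank3exact} is $\lfloor p/2\rfloor$; for $p=2$ the value $\tau=\infty$ gives the same answer. For a square $q$, Bruen's bound $\tau(q)\ge q+\sqrt q+1$ holds with equality for a Baer subplane, and this gives \eqref{eq:rank3square}.

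The main obstacle is the line-free case. There I must make sure that line intersections never dominate the plane term $\lceil\tau/3\rceil$; the estimate $|B\cap L|\le|B|-q$ together with an upper bound on $\tau(q)$ is what handles this. I should double-check the small-$q$ edge cases where $\tau(q)$ is close to $3q$.
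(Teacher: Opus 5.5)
Your proposal is correct and follows the paper's proof essentially step for step: the whole line in rank two; the split into blockers that contain a line and line-free blockers in rank three; the bound $|B\cap\ell|\le|B|-q$ obtained from a point $R\in\ell\setminus B$; and then Blokhuis and Bruen for the specializations.

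The only difference is how you certify $\tau(q)\le 3q$. The paper gives an explicit line-free blocker of size $2q$ for every $q\ge3$: the line $Z=0$ minus $(1:0:0)$, together with $(1:0:1)$ and the points $(0:t:1)$ with $t\ne0$. This makes the argument self-contained and replaces your unspecified ``standard construction'' for even $q$. Note also that for $q=3$ the projective triangle has $6=2q$ points, not at most $2q-1$. This slip is harmless, since only $\tau(q)\le 3q$ is needed.
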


\begin{proof}
In $\PG(1,q)$ the hyperplanes are points, so the unique blocking set is the
whole line of $q+1$ points.  Its density is
$\lceil(q+1)/2\rceil$, proving \eqref{eq:rank2}.

Now let $B$ be a spanning blocking set in $\PG(2,q)$.  If $B$ contains a
line, then
\[
 \delta(B)\ge\left\lceil\frac{q+1}{2}\right\rceil.
\]
If it contains no line, then $|B|\ge\tau(q)$, and hence
\[
 \delta(B)\ge\left\lceil\frac{\tau(q)}3\right\rceil.
\]
This gives the lower bound for the minimum blocker density.

A line together with one external point is a spanning blocker of density
$\lceil(q+1)/2\rceil$.  For $q\ge3$, put $L_0:Z=0$ and
$P_0=(1:0:0)$.  The set
\[
 (L_0\setminus\{P_0\})\cup\{(1:0:1)\}
 \cup\{(0:t:1):t\in\F_q^\times\}.
\]
meets every line through $P_0$, while every other line meets $L_0$ away
from $P_0$.  It contains no line.  Indeed, any contained line other than
$L_0$ would contain all $q$ displayed affine points; but two points
$(0:t:1)$ determine the line $X=0$, which does not contain $(1:0:1)$.
Thus $\tau(q)\le2q$.

Now choose a line-free blocker $B_0$ of size $\tau(q)$.  For any line
$\ell$, choose $R\in\ell\setminus B_0$.  The other $q$ lines through $R$
require distinct points of $B_0\setminus\ell$, so
$|B_0\cap\ell|\le\tau(q)-q$.  Since $\tau(q)\le2q$, we have
\[
 \left\lceil\frac{|B_0\cap\ell|}{2}\right\rceil
 \le
 \left\lceil\frac{\tau(q)}3\right\rceil.
\]
The term in $\delta(B_0)$ corresponding to the whole plane is the right-hand
side.  Hence
$\delta(B_0)=\lceil\tau(q)/3\rceil$.  Together with the preceding two
lower bounds, these examples prove \eqref{eq:rank3exact}; the case $q=2$
comes from the line-plus-point example.

For an odd prime $p$, Blokhuis's theorem and the projective-triangle
construction give $\tau(p)=3(p+1)/2$ \cite{Blokhuis}; $p=2$ was handled
above.  This yields \eqref{eq:rank3prime}.  For square $q$, Bruen's theorem
and a Baer subplane give $\tau(q)=q+\sqrt q+1$ \cite{Bruen}, which yields
\eqref{eq:rank3square}.  This completes the proof.
\end{proof}

The exact formulas have an immediate consequence for
Conjecture~\ref{conj:prime-field}.  In fact they prove the stronger
characteristic statement through rank three.

\begin{corollary}[Characteristic threshold in low rank]
\label{cor:rank3conjecture}
Let $M$ be a loopless matroid of rank at most three, represented over
$\F_q$, where $q$ has characteristic $p$.  If
$p>2a(M)-1$, then $\chi_M(q)>0$.
\end{corollary}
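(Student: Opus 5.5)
We reduce to the thresholds of Theorem~\ref{thm:rank23}. By the definition of $\rmax(q,n)$, for rank $n\in\{2,3\}$ it suffices to show that $a(M)\le\rmax(q,n)$. The hypothesis $p>2a(M)-1$ says $2a(M)\le p$, that is,
\[
 a(M)\le\left\lfloor\frac p2\right\rfloor.
\]
So in each rank it is enough to show $\lfloor p/2\rfloor\le\rmax(q,n)$. The degenerate ranks are immediate. If $M$ is empty (rank $0$), then $\chi_M(q)=1$. If $M$ is loopless of rank one, every element is parallel, and Lemma~\ref{lem:finite-field} gives $\chi_M(q)=q-1>0$.

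For rank two, \eqref{eq:rank2} gives $\rmax(q,2)=\lfloor q/2\rfloor\ge\lfloor p/2\rfloor$, since $q\ge p$. This settles rank two.

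For rank three, \eqref{eq:rank3exact} shows we need both $\lfloor p/2\rfloor\le\lfloor q/2\rfloor$, which holds as before, and
\[
 \left\lfloor\frac p2\right\rfloor\le\left\lceil\frac{\tau(q)}3\right\rceil-1.
\]
The second inequality follows from $\tau(q)>3\lfloor p/2\rfloor$. I split according to $k$, where $q=p^k$.
\begin{itemize}
\item If $q=2$, then $\tau(2)=+\infty$ and there is nothing to prove.
\item If $k=1$ and $p$ is odd, then \eqref{eq:rank3prime} already gives $\rmax(p,3)=\lfloor p/2\rfloor$. Equivalently, $\tau(p)=3(p+1)/2>3(p-1)/2$.
\item If $k\ge2$, I invoke Bruen's lower bound $\tau(q)\ge q+\sqrt q+1$ for any line-free blocking set in $\PG(2,q)$; this is valid for every $q$, not only squares. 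Then
\[
 \tau(q)\ge p^2+p+1>\frac{3p}{2}\ge 3\left\lfloor\frac p2\right\rfloor .
\]
This covers every case, including $p=2$ with $q\ge4$.
\end{itemize}

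The main point needing care is the $k\ge2$ case for nonsquare $q$. Theorem~\ref{thm:rank23} records the exact value of $\tau(q)$ only for prime and square $q$, so I rely on Bruen's general inequality rather than on an exact value. Even cruder bounds would suffice here, for instance $\tau(q)\ge q+2$, because $q\ge p^2$ dominates $3p/2$. Combining the ranks, $a(M)\le\rmax(q,\rk M)$ in every case, and therefore $\chi_M(q)>0$.
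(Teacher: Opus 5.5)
Your proposal is correct and follows the paper's proof essentially line for line: you reduce $p>2a(M)-1$ to $a(M)\le\lfloor p/2\rfloor$, handle ranks zero and one directly, use $\rmax(q,2)=\lfloor q/2\rfloor$, apply the prime-field rank-three formula when $q=p$, and apply Bruen's bound $\tau(q)\ge q+\sqrt q+1\ge p^2+p+1$ when $q\ge p^2$. Your separate treatment of $q=2$ is already covered by the prime formula, and your side remark is also correct: the elementary bound $\tau(q)\ge q+2$ would suffice for $k\ge2$, so Bruen's theorem is not actually needed there.
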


\begin{proof}
Put $r=a(M)$.  Ranks zero and one are immediate: their simplified
characteristic polynomials are respectively $1$ and $t-1$.  The hypothesis
implies $r\le\lfloor p/2\rfloor$.  In rank two,
\eqref{eq:rank2} gives
$\rmax(q,2)=\lfloor q/2\rfloor\ge\lfloor p/2\rfloor$.  In rank three over
$q=p$, use \eqref{eq:rank3prime}.  If $q\ge p^2$, then
\[
 \left\lfloor\frac q2\right\rfloor\ge\left\lfloor\frac p2\right\rfloor,
 \qquad
 \left\lceil\frac{\tau(q)}{3}\right\rceil-1
 \ge
 \left\lceil\frac{q+\sqrt q+1}{3}\right\rceil-1
 \ge\left\lfloor\frac p2\right\rfloor.
\]
Bruen's theorem supplies the inequality
$\tau(q)\ge q+\sqrt q+1$.  Thus both
terms in \eqref{eq:rank3exact} are at least $r$.  This proves the
corollary.
\end{proof}

\section{From the plane to higher rank}\label{sec:projection}

The number $\tau(q)$ controls more than rank three.  Projection from a point
outside $B$ sends a spanning blocker to one rank lower.  If the center lies
on a secant of $B$, two points have the same image, and hence
$|B|\ge|\pi_P(B)|+1$.  We first use this observation in rank four and then
iterate it.

We call a projective subspace \emph{rank $j$} when its underlying vector
space has dimension $j$.  Thus lines, planes, and solids have ranks two,
three, and four.

\begin{lemma}[Projection]\label{lem:projection}
Let $d\ge4$, let $B\subseteq\PG(d-1,q)$ be a spanning blocker, let
$P\notin B$, and let $H$ be a hyperplane not containing $P$.  Write
\[
 \pi_P(X)=\langle P,X\rangle\cap H,\qquad B'=\pi_P(B).
\]
Then:
\begin{enumerate}
\item $B'$ spans $H$ and meets every hyperplane of $H$;
\item if a line $\ell\subseteq H$ lies in $B'$, then
      $|B\cap\langle P,\ell\rangle|\ge q+1$;
\item if $P$ lies on a line containing $t$ points of $B$, then
      $|B'|\le |B|-t+1$.
\end{enumerate}
\end{lemma}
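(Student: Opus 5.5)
The plan is to work with the correspondence between subspaces of $H$ and subspaces of the ambient space through $P$. Since $P\notin H$, the map $W\mapsto\langle P,W\rangle$ is a bijection from subspaces of $H$ onto subspaces containing $P$, and it raises rank by one. Its inverse is $U\mapsto U\cap H$. Under this correspondence a point $X\ne P$ maps into $W$ if and only if $X\in\langle P,W\rangle$. In particular, $\pi_P(X)$ is well defined for every $X\in B$, because $P\notin B$ forces $X\ne P$.

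For (1), I would first handle spanning. $B$ spans $\PG(d-1,q)$, and each $X\in B$ lies in $\langle P,\pi_P(X)\rangle$. Hence $\langle P,B'\rangle$ contains $B$ and is the whole space. Intersecting with $H$ shows that $B'$ spans $H$. For blocking, let $W$ be a hyperplane of $H$. Then $\langle P,W\rangle$ is a hyperplane of the ambient space, so it contains some $X\in B$, and therefore $\pi_P(X)\in W$. I note that the hypothesis $d\ge4$ is not needed here; it presumably serves later applications.

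For (2), let $\ell\subseteq B'$ be a line of $H$. The plane $\langle P,\ell\rangle$ is the union of the $q+1$ lines $\langle P,Y\rangle$ with $Y\in\ell$, and any two of these lines meet only in $P$. Each $Y\in\ell$ lies in $B'$, so $Y=\pi_P(X_Y)$ for some $X_Y\in B$. This $X_Y$ lies on $\langle P,Y\rangle\setminus\{P\}$. The sets $\langle P,Y\rangle\setminus\{P\}$ are pairwise disjoint, so the $q+1$ points $X_Y$ are distinct points of $B\cap\langle P,\ell\rangle$.

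For (3), let $m$ be a line through $P$ containing $t$ points of $B$. Since $P\notin H$, the line $m$ meets $H$ in a single point $Y$. All $t$ points of $B\cap m$ map to $Y$, so the map $B\to B'$ has a fiber of size at least $t$. It follows that $|B'|\le|B|-t+1$ (or $|B'|\le|B|$ if $t=0$).

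None of these steps is a real obstacle. The only care needed is in (2), where one must check that the preimages are distinct, and this follows from the disjointness of the punctured lines through $P$.
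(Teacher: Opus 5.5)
Your proof is correct and follows the paper's argument essentially step for step: you use the correspondence $W\mapsto\langle P,W\rangle$ for spanning and blocking (the paper phrases spanning contrapositively), you take distinct preimages of the $q+1$ points of $\ell$ inside $\langle P,\ell\rangle$, and you collapse the $t$ collinear points to one image. Your explicit check that the preimages are distinct, via the disjoint punctured lines through $P$, fills in the detail the paper leaves implicit, and your remark that $d\ge4$ is not used in the lemma itself is accurate.
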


\begin{proof}
If $B'$ lay in a proper subspace $K$ of $H$, then $B$ would lie in the
proper subspace $\langle P,K\rangle$, contradicting the assumption that
$B$ spans.  If $J$ is a
hyperplane of $H$, the ambient hyperplane $\langle P,J\rangle$ meets $B$,
and the image of a point of intersection lies in $J$.  This proves the first
assertion.

For the second, choose one preimage of each of the $q+1$ points of $\ell$.
These preimages are distinct and lie in $\langle P,\ell\rangle$.  Finally,
the $t$ collinear points in the third assertion have one image, so the
projection loses at least $t-1$ points.
\end{proof}

The following count will be used in both projection arguments: if a
hyperplane of a proper subspace misses $B$, then at least $q$ points of $B$
lie outside that subspace.

\begin{lemma}[Missing-subspace count]\label{lem:missing-flat}
Let $B\subseteq\PG(d-1,q)$ meet every hyperplane.  Let $S$ be a proper
rank-$s$ subspace, and let $T$ be a hyperplane of $S$.  If $T\cap B$ is
empty, then
\[
 |B\setminus S|\ge q.
\]
\end{lemma}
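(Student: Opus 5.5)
The plan is to count, for each point of $B$ outside $S$, the hyperplanes of the ambient space through $T$ that it can lie on. Throughout, work in vector-space terms: $S$ has dimension $s<d$, and $T\le S$ has dimension $s-1$.

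First consider the hyperplanes of $\PG(d-1,q)$ that contain $T$ but not $S$. Choose a subspace $W$ of dimension $d-s$ complementary to $S$. Every hyperplane $\Pi\supseteq T$ with $\Pi\not\supseteq S$ satisfies $\Pi\cap S=T$. Passing to the quotient $V/T$, which has dimension $d-s+1$, such hyperplanes correspond to hyperplanes of $V/T$ not containing the line $S/T$. The number of hyperplanes of $V/T$ is $(q^{d-s+1}-1)/(q-1)$, and those containing $S/T$ number $(q^{d-s}-1)/(q-1)$. The difference is $q^{d-s}$.

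Each such hyperplane $\Pi$ must meet $B$. It cannot meet $B$ inside $S$, because $\Pi\cap S=T$ and $T\cap B=\varnothing$. Hence each of these $q^{d-s}$ hyperplanes contains a point of $B\setminus S$.

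Next, bound how many of these hyperplanes one point $X\notin S$ can lie on. The hyperplanes containing both $T$ and $X$ correspond to hyperplanes of $V/T$ containing the line $\langle X,T\rangle/T$. There are $(q^{d-s}-1)/(q-1)$ of them in total, though some may contain $S$. So each point of $B\setminus S$ covers at most $(q^{d-s}-1)/(q-1)$ of the relevant hyperplanes, and therefore
\[
 |B\setminus S|\ge \frac{q^{d-s}(q-1)}{q^{d-s}-1}>q-1 .
\]
This gives $|B\setminus S|\ge q$.

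The sharpening is exact enough to need care. A cleaner route is to restrict to a single subspace: pick a rank-$(s+1)$ subspace $S^+\supset S$. The $q$ hyperplanes of $S^+$ through $T$ other than $S$ each extend to ambient hyperplanes meeting $S^+$ exactly in them. For this, choose a complement $C$ of $S^+$ and take $\Pi=\langle \text{that hyperplane},C\rangle$. These $q$ ambient hyperplanes pairwise intersect exactly in $\langle T,C\rangle$, which meets $S$ only in $T$. Now check whether a point of $B$ can lie in $\langle T,C\rangle\setminus S$; if it can, the disjointness argument breaks. This is the main obstacle.

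To avoid it, use the counting inequality above, which already yields the bound. The ratio is strictly greater than $q-1$, and $|B\setminus S|$ is an integer, so it is at least $q$. This cleanly proves the lemma. The only care needed is the count $q^{d-s}$ of hyperplanes through $T$ not containing $S$, which follows from the quotient by $T$.
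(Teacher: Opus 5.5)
Your proposal is correct and is essentially the paper's argument: you double-count incidences between $B\setminus S$ and the $q^{d-s}$ ambient hyperplanes through $T$ not containing $S$, each of which must contain a point of $B\setminus S$. The paper uses the exact count $q^{d-s-1}$ of such hyperplanes through a fixed point outside $S$, so $q^{d-s}\le|B\setminus S|\,q^{d-s-1}$ gives the bound directly; you instead use the cruder (but valid) bound $(q^{d-s}-1)/(q-1)$ and finish by integrality, and the abandoned ``cleaner route'' paragraph can simply be deleted.
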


\begin{proof}
There are $q^{d-s}$ ambient hyperplanes through $T$ that do not contain
$S$.  Each contains a point of $B\setminus S$.  A fixed point outside $S$
lies in $q^{d-s-1}$ of these hyperplanes.  Counting incident pairs gives
$q^{d-s}\le |B\setminus S|q^{d-s-1}$, as required.
\end{proof}

\subsection{The rank-four bound}

In rank four, one projection reaches the projective plane, where the exact
parameter $\tau(q)$ is available.

\begin{theorem}[Rank-four projection bound]\label{thm:rank4projection}
Every spanning blocker $B\subseteq\PG(3,q)$ satisfies at least one of the
following alternatives:
\begin{enumerate}
\item $B$ contains a line;
\item for some plane $\Pi$, the set $B\cap\Pi$ is a line-free planar
      blocker;
\item $|B|\ge2q+1$;
\item $|B|\ge\tau(q)+1$.
\end{enumerate}
In particular,
\begin{equation}\label{eq:rank4-bound}
 \delta(B)\ge
 \min\left\{
   \left\lceil\frac{2q+1}{4}\right\rceil,\,
   \left\lceil\frac{\tau(q)+1}{4}\right\rceil
 \right\}.
\end{equation}
Consequently
\[
 \rmax(q,4)\ge
 \min\left\{
   \left\lceil\frac{2q+1}{4}\right\rceil,\,
   \left\lceil\frac{\tau(q)+1}{4}\right\rceil
 \right\}-1.
\]
\end{theorem}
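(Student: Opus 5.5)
We assume $B$ satisfies neither (1) nor (2) and show that (3) or (4) holds. The tool is Lemma~\ref{lem:projection}. Choose a point $P\notin B$ and a plane $H$ not through $P$. Then $B'=\pi_P(B)$ is a spanning blocker of $H\cong\PG(2,q)$.

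Suppose first that $B'$ contains a line $\ell$. By part~2 of Lemma~\ref{lem:projection}, the plane $\Pi=\langle P,\ell\rangle$ contains at least $q+1$ points of $B$. By (1) these points do not form a line, so $B\cap\Pi$ does not contain a line. By (2), $B\cap\Pi$ cannot then block $\Pi$. Hence some line $T\subseteq\Pi$ misses $B$. Lemma~\ref{lem:missing-flat}, applied with $S=\Pi$, gives $|B\setminus\Pi|\ge q$, and therefore $|B|\ge 2q+1$, which is (3).

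Suppose instead that $B'$ is line-free. Then $|B'|\ge\tau(q)$. We want to pick $P$ on a secant of $B$, so that part~3 with $t=2$ gives $|B|\ge|B'|+1\ge\tau(q)+1$, which is (4). This requires a secant line of $B$ that is not contained in $B$. Such a line exists, because $B$ spans $\PG(3,q)$ and so has two distinct points, and by (1) the line through them is not contained in $B$. The subtlety is that $P$ is fixed before we know which case occurs. So we fix the secant point $P$ first, and then the dichotomy applies to this same $P$. The line case still gives (3) and the line-free case gives (4). Choosing $P$ this way is the only real obstacle, and it is handled by this ordering.

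For \eqref{eq:rank4-bound}, in alternative (1) the line gives $\delta(B)\ge\lceil(q+1)/2\rceil\ge\lceil(2q+1)/4\rceil$. In alternative (2) the planar blocker gives $\delta(B)\ge\lceil\tau(q)/3\rceil\ge\lceil(\tau(q)+1)/4\rceil$, using the plane term of \eqref{eq:density}; this inequality holds for $\tau(q)\ge 3$. In alternatives (3) and (4) we use the whole-space term $\lceil |B|/4\rceil$. If $q=2$ then $\tau=+\infty$ and alternative (2) is vacuous. The bound on $\rmax(q,4)$ then follows from \eqref{eq:rmax-blocking}.
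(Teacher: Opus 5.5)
Your proof is correct and follows the paper's argument essentially step for step. You fix $P$ on a secant of $B$ outside $B$ and project to a plane; a line in the image yields (2) or, via Lemma~\ref{lem:missing-flat}, $|B|\ge 2q+1$, while a line-free image yields $|B|\ge\tau(q)+1$ from the collapsed pair, and the density bounds follow as in the paper (your note that $\lceil\tau(q)/3\rceil\ge\lceil(\tau(q)+1)/4\rceil$ needs $\tau(q)\ge3$ just makes explicit a step the paper leaves implicit).
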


\begin{proof}
If $B$ contains a line, the first alternative holds.  Otherwise choose
distinct $a,b\in B$ and a point $P\in ab\setminus B$, and project onto a
plane $H$ not containing $P$.  By Lemma~\ref{lem:projection}, the image
$B'$ is a planar blocking set and $\pi_P(a)=\pi_P(b)$.

Suppose first that $B'$ contains a line $\ell$.  The plane
$\Pi=\langle P,\ell\rangle$ contains at least $q+1$ points of $B$.  If a
line of $\Pi$ misses $B$, Lemma~\ref{lem:missing-flat} gives
$|B\setminus\Pi|\ge q$, and hence $|B|\ge2q+1$, the third alternative.
Otherwise $B\cap\Pi$ is a line-free planar blocker, the second alternative.

If $B'$ contains no line, then $|B'|\ge\tau(q)$.  The collapsed pair
$a,b$ gives $|B|\ge|B'|+1\ge\tau(q)+1$, the fourth alternative.

The first and third alternatives give
$\delta(B)\ge\lceil(2q+1)/4\rceil$.  The second gives
$\delta(B)\ge\lceil\tau(q)/3\rceil
\ge\lceil(\tau(q)+1)/4\rceil$, and the fourth gives the latter bound
directly.  This proves \eqref{eq:rank4-bound} and its consequence for
$\rmax(q,4)$.
\end{proof}

\begin{corollary}[Prime fields and proper extension fields]
\label{cor:rank4-consequences}
The rank-four bound has the following consequences.
\begin{enumerate}
\item For every odd prime $p$,
\[
 \rmax(p,4)\ge\left\lceil\frac{3p+5}{8}\right\rceil-1.
\]
\item For $p\in\{2,3,5,7\}$,
\[
 \rmax(p,4)=\left\lfloor\frac p2\right\rfloor.
\]
\item Let $q=p^k$ with $k\ge2$.  If a loopless $\F_q$-representable
matroid $M$ has rank at most four and
$p>2a(M)-1$, then $\chi_M(q)>0$.
\end{enumerate}
\end{corollary}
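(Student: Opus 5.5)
The plan is to derive all three parts from the rank-four projection bound, Theorem~\ref{thm:rank4projection}. We feed it the known values of $\tau$: Blokhuis's value $\tau(p)=3(p+1)/2$ for odd primes $p$, the convention $\tau(2)=+\infty$, and Bruen's inequality $\tau(q)\ge q+\sqrt q+1$. Matching upper bounds come from monotonicity (Lemma~\ref{lem:rmax-monotone}) together with Theorem~\ref{thm:rank23}.

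\emph{Part 1.} For odd $p$, substitute $\tau(p)=3(p+1)/2$ into the consequence of Theorem~\ref{thm:rank4projection}. The second term becomes $\lceil(3p+5)/8\rceil$. It remains to check that this term is the minimum. Since $2(2p+1)=4p+2\ge 3p+5$ exactly when $p\ge3$, we have $(2p+1)/4\ge(3p+5)/8$. Taking ceilings preserves the inequality, so the minimum is $\lceil(3p+5)/8\rceil$, and subtracting $1$ gives the claim.

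\emph{Part 2.} The upper bound uses Lemma~\ref{lem:rmax-monotone} and \eqref{eq:rank3prime}:
\[
\rmax(p,4)\le\rmax(p,3)=\lfloor p/2\rfloor .
\]
The lower bound is a direct computation for each prime.
\begin{itemize}
\item For $p=2$, we have $\tau(2)=+\infty$, so the minimum in Theorem~\ref{thm:rank4projection} is $\lceil 5/4\rceil=2$. This gives $\rmax(2,4)\ge1$.
\item For $p=3,5,7$, Part~1 gives $\lceil 14/8\rceil-1=1$, $\lceil 20/8\rceil-1=2$ and $\lceil 26/8\rceil-1=3$ respectively.
\end{itemize}
Each of these equals $\lfloor p/2\rfloor$.

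\emph{Part 3.} We may assume $M$ is simple, since passing to $\si(M)$ changes neither $a(M)$ nor the positivity of $\chi_M(q)$. Ranks at most three are covered by Corollary~\ref{cor:rank3conjecture}. For rank four, the hypothesis gives
\[
a(M)\le\lfloor p/2\rfloor\le (p-1)/2 \quad\text{for odd } p, \qquad a(M)\le 1 \quad\text{for } p=2 .
\]
By the definition of $\rmax(q,4)$, it suffices to show that both terms of the minimum in Theorem~\ref{thm:rank4projection}, each reduced by $1$, are at least $\lfloor p/2\rfloor$. Here $q\ge p^2$.
\begin{itemize}
\item For the first term, $\lceil(2q+1)/4\rceil-1\ge\lceil(2p^2+1)/4\rceil-1$, which clearly exceeds $p/2$.
\item For the second term, Bruen's inequality gives
\[
\left\lceil\frac{\tau(q)+1}{4}\right\rceil\ge\left\lceil\frac{q+\sqrt q+2}{4}\right\rceil\ge\left\lceil\frac{p^2+p+2}{4}\right\rceil .
\]
Because $p^2+p+2\ge 2p+2$, the right-hand side is at least $(p+1)/2$. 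Subtracting $1$ leaves at least $(p-1)/2\ge\lfloor p/2\rfloor$ for odd $p$. For $p=2$ it gives $\lceil 8/4\rceil-1=1$.
\end{itemize}
Hence $a(M)\le\rmax(q,4)$, so $\chi_M(q)>0$.

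The main obstacle is the bookkeeping with ceilings and floors. In particular, one must confirm in Part~1 which term of the minimum is binding, and in Part~3 the small case $p=2$ with $q=4$ needs a separate check. Otherwise everything is substitution into Theorem~\ref{thm:rank4projection}.
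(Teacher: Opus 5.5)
Your proposal is correct and follows the paper's proof essentially step for step: Blokhuis's value inserted into the rank-four bound for Part~1, monotonicity with \eqref{eq:rank3prime} plus the explicit ceilings for Part~2, and Corollary~\ref{cor:rank3conjecture} with Bruen's inequality for Part~3. There are two small differences. For $p=2$ you get $\rmax(2,4)\ge1$ from the rank-four bound with $\tau(2)=+\infty$, whereas the paper cites Theorem~\ref{thm:matroid-extension}; this is valid because alternatives~2 and~4 cannot occur over $\F_2$. In Part~3 you treat odd $p$ and $p=2$ separately, which simply writes out the paper's one-line claim that both terms are at least $\lfloor p/2\rfloor+1$.
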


\begin{proof}
For odd $p$, Blokhuis's theorem and the projective-triangle construction give
$\tau(p)=3(p+1)/2$.  The second term in \eqref{eq:rank4-bound} becomes
$\lceil(3p+5)/8\rceil$, which is no larger than the first.  This proves the
first assertion.  Lemma~\ref{lem:rmax-monotone} and
Theorem~\ref{thm:rank23} give
$\rmax(p,4)\le\lfloor p/2\rfloor$; the first assertion gives equality for
$p=3,5,7$, while Theorem~\ref{thm:matroid-extension} handles $p=2$.

For the last assertion, put $r=a(M)$.  The hypothesis gives
$r\le\lfloor p/2\rfloor$.  Ranks at most three follow from
Corollary~\ref{cor:rank3conjecture}.  In rank four, Bruen's theorem gives
\[
 \tau(q)\ge q+\sqrt q+1\ge p^2+p+1.
\]
Since $q\ge p^2$, both terms on the right of \eqref{eq:rank4-bound} are at
least $\lfloor p/2\rfloor+1$.  Thus $r\le\rmax(q,4)$, which proves the
claim.
\end{proof}

\subsection{Iteration in higher rank}

Repeated projection either produces a dense lower-rank section or decreases
the size of the image by at least one at each step.  The three terms below correspond, respectively, to a
section with at least $2q+1$ points, a lower bound for $|B|$, and a proper
section with at least $\tau(q)+1$ points.

\begin{theorem}[Projection in every rank]\label{thm:projection-all-ranks}
Let $d\ge4$.  Every spanning blocker $B\subseteq\PG(d-1,q)$ satisfies
\begin{equation}\label{eq:all-rank-projection}
 \delta(B)\ge
 \min\left\{
  \left\lceil\frac{2q+1}{d}\right\rceil,\,
  \left\lceil\frac{\tau(q)+d-3}{d}\right\rceil,\,
  \left\lceil\frac{\tau(q)+1}{d-1}\right\rceil
 \right\}.
\end{equation}
Consequently, $\rmax(q,d)$ is at least the right-hand side minus one.
\end{theorem}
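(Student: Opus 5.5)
We will prove \eqref{eq:all-rank-projection} by iterated projection, generalizing the proof of Theorem~\ref{thm:rank4projection}. Put $B_d=B\subseteq\PG(d-1,q)$. As long as the current set $B_j\subseteq\PG(j-1,q)$ with $j\ge4$ contains no line, we pick two distinct points $a,b\in B_j$ and a point $P\in ab\setminus B_j$. Such a point exists because the line $ab$ is not contained in $B_j$. We then project from $P$ onto a hyperplane. By Lemma~\ref{lem:projection}, the image $B_{j-1}$ is a spanning blocker of $\PG(j-2,q)$ and satisfies $|B_{j-1}|\le|B_j|-1$. We continue until either some $B_j$ with $j\ge4$ contains a line, or we reach rank three.

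\textbf{Case 1: we reach $B_3\subseteq\PG(2,q)$.} If $B_3$ is line-free, then $|B_3|\ge\tau(q)$. There were $d-3$ collapsing steps, so $|B|\ge\tau(q)+d-3$. The whole-space term of \eqref{eq:density} then gives $\delta(B)\ge\lceil(\tau(q)+d-3)/d\rceil$.

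\textbf{Case 2: some image contains a line.} Suppose that $B_j$ contains a line $\ell$, where either $j=3$ or $j\ge4$ and we stopped there. Pull $\ell$ back through the successive projection centres, using part~2 of Lemma~\ref{lem:projection} repeatedly (each step raises the rank of the span by one). This yields a subspace $S$ of rank $d-j+2$ in the original space with $|B\cap S|\ge q+1$. Concretely, $S$ is spanned by $\ell$ together with the $d-j$ centres used so far, and it is a proper subspace whenever $j\ge3$. For $j=d$ we have $S=\ell\subseteq B$. We would then like to conclude as in rank four. Either every hyperplane of $S$ meets $B$, or some hyperplane $T$ of $S$ misses $B$. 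In the second subcase, Lemma~\ref{lem:missing-flat} gives $|B\setminus S|\ge q$, hence $|B|\ge2q+1$ and $\delta(B)\ge\lceil(2q+1)/d\rceil$. In the first subcase, $B\cap S$ is a spanning-in-$S$ blocker of lower rank. If $S$ happens to be the line itself (so $\ell\subseteq B$), the line term gives $\lceil(q+1)/2\rceil\ge\lceil(2q+1)/d\rceil$.

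\textbf{Main obstacle.} The hard step is the first subcase when $S$ has rank between $3$ and $d-1$. Here we want to apply an induction on $d$: the statement for $B\cap S$, viewed in $S$ (where it may need to be restricted to its own span), combined with Theorem~\ref{thm:rank23} in rank three. We must then check that each resulting bound dominates one of the three terms in \eqref{eq:all-rank-projection} with denominator $d$ or $d-1$. The term $\lceil(\tau(q)+1)/(d-1)\rceil$ is designed for exactly this. A proper section of rank at most $d-1$ that is line-free inherits, via the same projection count, at least $\tau(q)+1$ points. Such a section therefore contributes $|B\cap S|/\dim S\ge(\tau(q)+1)/(d-1)$. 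Rank-three sections give $\lceil\tau(q)/3\rceil$ or $\lceil(q+1)/2\rceil$, and both are at least the minimum. Making this induction close requires monotonicity in the rank of the three terms, i.e.\ showing that the bound for $d'<d$ is at least the bound for $d$. This holds because each term is nonincreasing in its denominator, and $\tau(q)+d'-3$ over $d'$ must be compared with the other terms; this arithmetic is where the most care is needed.

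Finally, the statement about $\rmax(q,d)$ follows immediately from \eqref{eq:rmax-blocking}.
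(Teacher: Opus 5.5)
Your argument is correct, and it takes a different route from the paper.

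\emph{The paper's route.} It inducts on the projected image $B'$ and carries five alternatives (a)--(e) that must be lifted back through $\pi_P$. Its delicate case is a line-free planar blocker inside $B'$, which it handles in the solid $\langle P,\Pi'\rangle$ using Theorem~\ref{thm:rank4projection}. That case needs $\tau(q)+q\ge\tau(q)+d-3$, which is why $q\le d-4$ is disposed of first. Lifting alternative (d) raises the rank without adding points, and this is where the term $\lceil(\tau(q)+1)/(d-1)\rceil$ comes from.

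\emph{Your route.} You track only the appearance of a line in some image and pull it back to a proper subspace $S$ with $|B\cap S|\ge q+1$. Then either Lemma~\ref{lem:missing-flat} gives $|B|\ge2q+1$, or $B\cap S$ is a spanning blocker of $W=\langle B\cap S\rangle$. Write ``of $W$'', not ``of $S$'', since $B\cap S$ need not span $S$. Because $W$ is a section of $B$ itself rather than of an image, strong induction applies to it directly, with Theorem~\ref{thm:rank23} covering ranks two and three. You need no solid analysis and no restriction on $q$.

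\emph{The monotonicity step.} The step you flag as the main obstacle is a one-line check. Both $\lceil(2q+1)/d'\rceil$ and $\lceil 1+(\tau(q)-3)/d'\rceil$ are nonincreasing in $d'$, since $\tau(q)\ge3$ (and $\tau(2)=+\infty$). In rank three, $\lceil\tau(q)/3\rceil\ge\lceil(\tau(q)+d-3)/d\rceil$ is equivalent to $(d-3)(\tau(q)-3)\ge0$. Finally, $\lceil(q+1)/2\rceil\ge\lceil(2q+1)/4\rceil$.

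\emph{What each route buys.} Nothing in your argument generates the third term. Your induction therefore closes with the two-term bound $\min\{\lceil(2q+1)/d\rceil,\lceil(\tau(q)+d-3)/d\rceil\}$. This implies the stated inequality and is strictly stronger when $d^2$ is large compared with $\tau(q)$. For example, $q=101$ and $d=40$ give $5$ where the stated bound gives $4$. The paper's version records the structural alternatives (a)--(e). Yours trades these for a shorter argument and a sharper bound.

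\emph{One sentence to drop.} Remove the claim that a line-free proper section ``inherits'' $\tau(q)+1$ points. It is unjustified, since projecting a line-free set can create a line, and your argument never uses it.
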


\begin{proof}
If $q\le d-4$, the first term on the right is at most two.  A spanning
blocker cannot be a projective basis: in basis coordinates, the hyperplane
$X_1+\cdots+X_d=0$ misses every basis point.  Thus $|B|\ge d+1$ and
$\delta(B)\ge2$, proving this case.  We may therefore assume
$q\ge d-3$.

We prove by induction on $d$ that at least one of the following alternatives
holds:
\begin{enumerate}
\item[(a)] $B$ contains a line;
\item[(b)] for some plane $\Pi$, the set $B\cap\Pi$ contains a line-free
      planar blocker $C$;
\item[(c)] for some rank-$j$ subspace $U$, with $4\le j\le d$,
      $|B\cap U|\ge2q+1$;
\item[(d)] for some rank-$j$ subspace $U$, with $4\le j\le d-1$,
      $|B\cap U|\ge\tau(q)+1$;
\item[(e)] $|B|\ge\tau(q)+d-3$.
\end{enumerate}
Each alternative implies \eqref{eq:all-rank-projection}.  Alternative (a)
gives $\delta(B)\ge\lceil(q+1)/2\rceil
\ge\lceil(2q+1)/d\rceil$, and (c) also gives the first displayed term.
Alternative (d) gives the third term, while (e) gives the second.
Finally, (b) gives density at least $\lceil\tau(q)/3\rceil$; this is at least
$\lceil(\tau(q)+1)/4\rceil$ when $d=4$, and at least the third term when
$d\ge5$.  When $q=2$, alternatives (b), (d), and (e) cannot occur.

For $d=4$, Theorem~\ref{thm:rank4projection} gives (a), (b), (c), or (e).
Suppose now that $d\ge5$ and that $B$ contains no line.
Choose $P\notin B$ on a secant and project onto a hyperplane $H$.  By
Lemma~\ref{lem:projection}, the image $B'$ spans $H$, blocks its
hyperplanes, and satisfies $|B'|\le|B|-1$.  Apply the induction hypothesis
to $B'$.

If $B'$ contains a line, its inverse image lies in a plane $\Pi$ containing
at least $q+1$ points of $B$.  A line of $\Pi$ missing $B$ gives
$|B|\ge2q+1$ by Lemma~\ref{lem:missing-flat}; if no line is missed, then
$B\cap\Pi$ is a line-free planar blocker.  Thus (c) or (b) holds.

Next suppose that a plane $\Pi'\subseteq H$ contains a line-free blocker
$C\subseteq B'$.  Put $S=\langle P,\Pi'\rangle$, a solid.  Then
$|B\cap S|\ge|C|\ge\tau(q)$.  If a plane of $S$ misses $B$, the
missing-subspace count gives
\[
 |B|\ge\tau(q)+q\ge\tau(q)+d-3,
\]
which is (e).

Otherwise $B\cap S$ meets every plane of $S$.  Put
$W=\langle B\cap S\rangle$.  Since $C$ spans $\Pi'$, there are only two
possibilities: $W$ is a plane not containing $P$, or $W=S$.  Indeed, no
smaller subspace can project onto $\Pi'$, and a plane through $P$ projects
to a line.  In the first case, projection restricts to a collineation
$W\to\Pi'$, and the inverse image of $C$ gives (b).  In the second, apply
Theorem~\ref{thm:rank4projection} inside $S$.  Its four alternatives give,
respectively, (a), (b), (c), and (d); the last is available because
$d\ge5$.

It remains to lift alternatives (c)--(e).  A rank-$j$ subspace
$U'\subseteq H$ lifts to the rank-$(j+1)$ subspace
$U=\langle P,U'\rangle$, and
$|B\cap U|\ge|B'\cap U'|$.  Thus every instance of (c) or (d) below the
top rank lifts to the same alternative one rank higher.  A top-rank instance
of (c) gives $|B|\ge2q+1$.  Finally, if
$|B'|\ge\tau(q)+(d-1)-3$, then $|B'|\le|B|-1$ gives
\[
 |B|\ge|B'|+1\ge\tau(q)+d-3.
\]
This is (e), and the induction is complete.
\end{proof}

Projection supplies lower bounds in every fixed rank.  We now turn in the
opposite direction: embedded subgeometries produce obstructions in
unbounded rank and lead back to the original problem of simultaneous
mappings.

\section{Subgeometries and sharpness}
\label{sec:sharpness}

Projective subgeometries furnish blockers whose density can be computed
exactly.  They show that the extension-degree bound is sharp for four fields
and give lower bounds for the field size required without an extension-degree
hypothesis.

\subsection{The extension-degree bound}

The relevant obstruction is the subgeometry
$\PG(k,p)\subseteq\PG(k,p^k)$.  For a prime $p$ and an integer $k\ge1$, set
\begin{equation}\label{eq:dpk}
 d_{p,k}:=
 \left\lceil
 \frac{p^{k+1}-1}{(k+1)(p-1)}
 \right\rceil.
\end{equation}

\begin{proposition}[Subgeometry obstruction]\label{prop:subgeometry}
Let $q=p^k$.  The projective geometry $\PG(k,p)$, viewed over $\F_q$, is a
spanning blocker in $\PG(k,q)$ and has density $d_{p,k}$.
Consequently
\begin{equation}\label{eq:global-bounds}
 k\le\rmax(p^k)\le
 \min\left\{\left\lfloor\frac{p^k}{2}\right\rfloor,\,
             d_{p,k}-1\right\}.
\end{equation}
In particular,
\[
 \rmax(q)=k\qquad\text{for }q\in\{2,3,4,8\}.
\]
\end{proposition}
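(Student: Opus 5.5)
Let $B$ be the set of points of $\PG(k,q)$ with coordinates in $\F_p$ (up to scalars), i.e.\ the canonical subgeometry $\PG(k,p)$. I would first show that $B$ is a spanning blocker. Spanning is immediate, since $B$ contains the standard basis points. For blocking, I would take a hyperplane $c_0X_0+\cdots+c_kX_k=0$ with $c_i\in\F_q$. Expanding each $c_i$ in an $\F_p$-basis of $\F_q$ (which has $k$ elements) turns this single $\F_q$-linear equation on $\F_p^{k+1}$ into $k$ homogeneous $\F_p$-linear equations in $k+1$ unknowns. Such a system has a nonzero solution in $\F_p^{k+1}$, and that solution gives a point of $B$ on the hyperplane. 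Hence $\chi_{M(B)}(q)=0$ by \eqref{eq:blocking-equivalence}.

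Next I would compute $\delta(B)$. Let $U\le\F_q^{k+1}$ have dimension $j$. The set $U\cap\F_p^{k+1}$ is an $\F_p$-space of $\F_p$-dimension at most $j$, since $\F_p$-independent vectors of $\F_p^{k+1}$ remain $\F_q$-independent. Therefore $|B\cap\PG(U)|\le (p^j-1)/(p-1)$. The ratio $(p^j-1)/(j(p-1))=(1+p+\cdots+p^{j-1})/j$ is an average of increasing terms, so it is nondecreasing in $j$. Hence the maximum in \eqref{eq:density} is attained at $U=\F_q^{k+1}$, and $\delta(B)=d_{p,k}$. The one point needing care is that ceilings preserve this monotonicity, which holds because $\lceil\cdot\rceil$ is monotone.

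For the bounds, I would apply \eqref{eq:rmax-blocking} in rank $n=k+1$. The blocker $B$ gives $\rmax(q,k+1)\le d_{p,k}-1$, and therefore $\rmax(q)\le d_{p,k}-1$. Equation \eqref{eq:rank2} together with Lemma~\ref{lem:rmax-monotone} gives $\rmax(q)\le\rmax(q,2)=\lfloor q/2\rfloor$. For the lower bound, Theorem~\ref{thm:matroid-extension} shows that every $\F_{p^k}$-representable loopless matroid with $a(M)\le k$ has $\chi_M(q)>0$, so $\rmax(q,n)\ge k$ for all $n$. Finally, I would check the four cases:
\begin{itemize}
\item $q=2$: $\lfloor 2/2\rfloor=1=k$.
\item $q=3$: $\lfloor 3/2\rfloor=1=k$.
\item $q=4$: $d_{2,2}=\lceil 7/3\rceil=3$, so the upper bound is $2=k$.
\item $q=8$: $d_{2,3}=\lceil 15/4\rceil=4$, so the upper bound is $3=k$.
\end{itemize}

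The main obstacle is the blocking step, namely reducing one $\F_q$-linear equation to $k$ equations over $\F_p$. The density computation is routine once one observes that $\F_q$-subspaces meet $\F_p^{k+1}$ in $\F_p$-subspaces of no larger dimension.
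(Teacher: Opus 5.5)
Your proposal is correct and follows the paper's proof essentially step for step. It uses the same $\F_p$-dimension count for blocking (your coordinate expansion is the paper's restricted $\F_p$-linear map $\F_p^{k+1}\to\F_q$), the same bound $\dim_{\F_p}(U\cap\F_p^{k+1})\le\dim U$ for the density, and the same appeal to Theorem~\ref{thm:matroid-extension}, \eqref{eq:rmax-blocking} and \eqref{eq:rank2} for the bounds. The differences are cosmetic: you prove monotonicity of $(p^j-1)/(j(p-1))$ by viewing it as an average of the increasing terms $1,p,\ldots,p^{j-1}$ rather than by clearing denominators, and you invoke Lemma~\ref{lem:rmax-monotone}, which is not needed for $\rmax(q)\le\rmax(q,2)$ because $\rmax(q)$ is defined as an infimum.
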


\begin{proof}
The standard coordinate points show that $\PG(k,p)$ spans over $\F_q$.
An $\F_q$-linear functional on $\F_q^{k+1}$ restricts to an
$\F_p$-linear map from a space of dimension $k+1$ to $\F_q$, which has
$\F_p$-dimension $k$.  Its kernel therefore contains a nonzero
$\F_p$-vector.  Hence every hyperplane meets the subgeometry.

If $U\le\F_q^{k+1}$ has dimension $j$, then
$U\cap\F_p^{k+1}$ has $\F_p$-dimension at most $j$, since vectors with
$\F_p$ coordinates that are independent over $\F_p$ remain independent
after extension of scalars to $\F_q$.  Hence
$\PG(k,p)\cap\PG(U)$ has at most $(p^j-1)/(p-1)$ points, and
\[
 \frac{p^j-1}{j(p-1)}
\]
is increasing in $j$.  Indeed, after positive denominators are cleared, the
inequality between consecutive terms reduces to
\[
 p^j\bigl(j(p-1)-1\bigr)+1\ge0.
\]
Formula \eqref{eq:density} thus gives density
$d_{p,k}$.  The lower bound in \eqref{eq:global-bounds} is
Theorem~\ref{thm:matroid-extension}; the two upper bounds follow from this obstruction
and \eqref{eq:rank2}.  Substitution gives equality for the four listed
fields: for $q\in\{2,3,4,8\}$, the corresponding upper bound equals $k$.
This proves the proposition.
\end{proof}

For these four fields, Theorem~\ref{thm:matroid-extension} therefore admits
no uniform-in-rank improvement.  The same subgeometry also violates the
field-size condition $q>2a(M)-1$ in the proper extensions specified next.

\Needspace{12\baselineskip}
\begin{corollary}[Simple counterexamples to a field-size threshold]
\label{cor:simple-counterexamples}
Suppose that either $k\ge3$, or $k=2$ and $p\ge5$.  For every integer
\[
 d_{p,k}\le r\le\left\lfloor\frac{p^k}{2}\right\rfloor
\]
there is a simple $\F_{p^k}$-representable matroid $M_{p,k,r}$ such that
\[
 a(M_{p,k,r})=r,\qquad
 p^k>2r-1,\qquad
 \chi_{M_{p,k,r}}(p^k)=0.
\]
\end{corollary}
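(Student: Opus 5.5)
The plan is to start from the subgeometry obstruction of Proposition~\ref{prop:subgeometry} and enlarge it one point at a time until the density reaches exactly $r$. Let $B_0=\PG(k,p)\subseteq\PG(k,q)$ with $q=p^k$. By Proposition~\ref{prop:subgeometry}, $B_0$ is a spanning blocker with $\delta(B_0)=d_{p,k}$. Any set $B$ with $B_0\subseteq B\subseteq\PG(k,q)$ is still spanning and still meets every hyperplane, so it is again a spanning blocker. By \eqref{eq:blocking-equivalence}, its simple vector matroid $M(B)$ then satisfies $\chi_{M(B)}(q)=0$. By Edmonds's formula it also satisfies $a(M(B))=\delta(B)$. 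The matroid is simple because distinct projective points give pairwise non-parallel vectors. It therefore suffices to find such a $B$ with $\delta(B)=r$. The inequality $p^k>2r-1$ is then automatic from $r\le\lfloor p^k/2\rfloor$.

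\emph{Step 1: the density is monotone and moves in unit steps.} Adding a point $Q$ to $B$ cannot decrease any term $\lceil |B\cap\PG(U)|/\dim U\rceil$ in \eqref{eq:density}. So $\delta$ is nondecreasing. Each count $|B\cap\PG(U)|$ grows by at most one, so each term $\lceil x/j\rceil$ grows by at most one, and hence $\delta(B\cup\{Q\})\le\delta(B)+1$. The only care needed is with subspaces $U$ that met $B$ in no point before $Q$ was added. For such $U$ the new term is $\lceil 1/\dim U\rceil=1$, and this never exceeds $\delta(B)\ge1$.

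\emph{Step 2: a target set of large density.} Choose two points of $B_0$ and let $L$ be the $\F_q$-line through them, and put $B_1=B_0\cup L$. Its density is at least $\lceil(q+1)/2\rceil\ge\lfloor q/2\rfloor\ge r$, using the term for the rank-two space of $L$. Now order the points of $L\setminus B_0$ and add them to $B_0$ one at a time. This produces a chain of spanning blockers whose densities start at $d_{p,k}\le r$, are nondecreasing, increase by at most one at each step, and end at a value at least $r$. By this discrete intermediate-value argument some member $B$ of the chain has $\delta(B)=r$. Set $M_{p,k,r}=M(B)$.

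The only point needing attention is the hypothesis on $(p,k)$. Its role is just to make the interval $d_{p,k}\le r\le\lfloor p^k/2\rfloor$ nonempty. Otherwise the statement is vacuous, and the construction above does not use the hypothesis at all. I would not verify in general that the interval is nonempty in those cases, since the proof goes through for any $r$ in the interval. I expect the main obstacle to be only the bookkeeping in Step 1, namely checking that the ceiling terms rise by at most one and that newly met subspaces do not create a jump.
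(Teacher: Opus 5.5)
Your proof is correct, but it follows a different route from the paper's.

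\textbf{The paper's construction.} The paper does not interpolate. It takes the direct sum $\PG(k,p)\oplus U_{2,2r}$.
\begin{itemize}
\item Simplicity and $\F_{p^k}$-representability follow from $2r\le p^k$.
\item The covering number is $\max\{d_{p,k},r\}=r$, because $a$ is the maximum over direct summands.
\item $\chi$ vanishes at $p^k$ because characteristic polynomials multiply and the $\PG(k,p)$ factor vanishes there.
\end{itemize}
This is explicit and needs no density bookkeeping. The cost is that the rank rises to $k+3$ and the matroid is disconnected.

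\textbf{Your construction.} You stay inside $\PG(k,q)$. You interpolate between $\PG(k,p)$, of density $d_{p,k}$, and $\PG(k,p)\cup L$, of density at least $\lfloor q/2\rfloor+1$. The mechanism is that adding one point raises $\delta$ by $0$ or $1$. This costs you three things: the unit-step check, the identity $a(M(B))=\delta(B)$, and a non-explicit choice of $B$ within the chain. In return you get counterexamples that are spanning blockers of the same rank $k+1$ as the subgeometry. They are also connected, since every added point is collinear with two points of $\PG(k,p)$. So you show that every density in the interval is already realized inside $\PG(k,q)$.

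\textbf{The hypothesis on $(p,k)$.} You are right that it only makes the interval nonempty, so it is not needed to prove the literal statement. The paper nonetheless verifies that nonemptiness, because that check is what guarantees the corollary actually exhibits counterexamples.
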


\begin{proof}
The stated interval for $r$ is nonempty because the inequality
$d_{p,k}\le\lfloor p^k/2\rfloor$ is equivalent to
\[
 1+p+\cdots+p^k\le(k+1)\left\lfloor\frac{p^k}{2}\right\rfloor.
\]
For $p=2$ and $k\ge3$, this follows from
$2^{k+1}-1\le(k+1)2^{k-1}$.  For odd $p$ and $k\ge3$, the left-hand side is
at most $3p^k/2\le2(p^k-1)$, which is no larger than the right-hand side.
When $k=2$ and $p\ge5$, the displayed inequality is equivalent to
$p^2-2p-5\ge0$.  Now take
\[
 M_{p,k,r}=\PG(k,p)\oplus U_{2,2r}.
\]
Here $U_{2,2r}$ is the rank-two uniform matroid on $2r$ elements.  It is
simple and representable over $\F_{p^k}$ because $2r\le p^k$.  The covering
number is the maximum under direct sum, so it equals $r$.  Characteristic
polynomials multiply, and
$\chi_{\PG(k,p)}(p^k)=0$.  Finally,
$r\le\lfloor p^k/2\rfloor$ implies $p^k>2r-1$.
This proves the corollary.
\end{proof}

The summand $U_{2,2r}$ raises the covering number to $r$ while preserving
simplicity.

\subsection{Returning to several mappings}

The matrix--matroid correspondence following
Theorem~\ref{thm:matroid-extension} turns a blocker covered by $r$
independent sets into $r$ invertible matrices for which no vector has all
$r$ images nowhere zero.  Blocking sets therefore give lower bounds for the
large-field threshold of Nagy, Pach, and Tomon.

For an integer $r\ge2$, let $Q(r)$ be the infimum of the real numbers $Q$
such that, for every prime power $q>Q$, every $n\ge1$, and every family
$A_1,\ldots,A_r\in\operatorname{GL}_n(\F_q)$, there is an $x\in\F_q^n$ for
which every $A_ix$ is nowhere zero.  Nagy, Pach, and Tomon proved that $Q(r)$
is finite \cite[Theorem~1.7]{NagyPachTomon}.  The preceding obstructions give
the following quantitative lower bounds.

\begin{proposition}[Lower bounds for the large-field threshold]
\label{prop:large-field-lower}
Let $q_r$ denote the largest prime power not exceeding $2r-1$.  Then
\[
 Q(r)\ge q_r.
\]
Moreover, for every integer $m\ge1$, put
\[
 d_m:=d_{2,m}=\left\lceil\frac{2^{m+1}-1}{m+1}\right\rceil.
\]
If $r\ge d_m$, then $Q(r)\ge2^m$.  Consequently, as $m\to\infty$,
\[
 Q(d_m)\ge\left(\frac12-o(1)\right)d_m\log_2d_m,
\]
whereas, for arbitrary $r\to\infty$,
\[
 Q(r)\ge\left(\frac14-o(1)\right)r\log_2r.
\]
\end{proposition}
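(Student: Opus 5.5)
The whole proposal rests on one mechanism: a spanning blocker $B\subseteq\PG(n-1,q)$ of density $\delta(B)\le r$ produces $r$ invertible matrices with no common nowhere-zero preimage. Indeed, Edmonds's formula gives a partition of $B$ into $\delta(B)$ independent sets, and we pad with singletons if necessary to get exactly $r$ parts. Each part is completed to a basis of $\F_q^n$, and the $r$ bases become the row sets of matrices $A_1,\ldots,A_r\in\operatorname{GL}_n(\F_q)$, as in the correspondence after Theorem~\ref{thm:matroid-extension}. A vector $x$ with every $A_ix$ nowhere zero would give a functional nonzero on all of $B$, i.e.\ a hyperplane missing $B$. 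That contradicts the blocking property. So every $q$ admitting such a blocker with $\delta(B)\le r$ is a bad field, and hence $Q(r)\ge q$.

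For the first bound, let $q=q_r\le 2r-1$. The rank-two blocker, the full line $\PG(1,q)$, has density $\lceil (q+1)/2\rceil\le r$, as in the proof of \eqref{eq:rank2}. Hence $q_r$ is bad, and $Q(r)\ge q_r$. One small point needs care. $Q(r)$ is an infimum over thresholds $Q$ with all $q>Q$ good, so a bad $q$ forces $Q\ge q$. The case $q_r=2$ for $r=2$ is covered in the same way.

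For the second bound, take $q=2^m$ and $B=\PG(m,2)\subseteq\PG(m,2^m)$. By Proposition~\ref{prop:subgeometry} it is a spanning blocker of density $d_{2,m}=d_m\le r$. So $2^m$ is bad, and $Q(r)\ge 2^m$.

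The asymptotics are a computation. From $d_m\sim 2^{m+1}/(m+1)$ we get $\log_2 d_m=m+1-\log_2(m+1)+o(1)\sim m$, and therefore
\[
 d_m\log_2 d_m\sim 2^{m+1}\,\frac{m}{m+1}\sim 2^{m+1}.
\]
This gives $Q(d_m)\ge 2^m=(\tfrac12-o(1))\,d_m\log_2 d_m$.

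For arbitrary $r\to\infty$, let $m$ be the largest integer with $d_m\le r$. Then $r<d_{m+1}\sim 2^{m+2}/(m+2)$, so
\[
 r\log_2 r\le (1+o(1))\,2^{m+2}.
\]
Since $Q(r)\ge 2^m$, this gives $Q(r)\ge(\tfrac14-o(1))\,r\log_2 r$. Here we use that $d_m$ is increasing and unbounded, and that $d_{m+1}\log_2 d_{m+1}\sim 2^{m+2}$.

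The main obstacle is the reduction in the first paragraph. We must check that the number of parts in the partition can be made exactly $r$ by padding. Padding with singletons is harmless, since any independent set extends to a basis. We must also check that $n$ equals the rank of the blocker's ambient space, so that all $A_i$ are square and invertible. Everything after that is routine, using Proposition~\ref{prop:subgeometry} and \eqref{eq:rank2}.
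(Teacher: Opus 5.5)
Your proof is correct and follows the paper's argument essentially step for step. The paper likewise uses the line $\PG(1,q_r)$ and the subgeometry $\PG(m,2)\subseteq\PG(m,2^m)$ as blockers of density at most $r$, converts them into $r$ invertible matrices by partitioning, padding and completing to bases, and then does the same asymptotic computation; it pads with empty classes or repeated matrices rather than singletons, which is equivalent. One harmless slip: $q_2=3$, not $2$, and in fact $q_r\ge3$ for every $r\ge2$, so your remark about the case $q_r=2$ is vacuous.
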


\begin{proof}
For every prime power $q\le2r-1$, the full projective line is a blocker of
density $\lceil(q+1)/2\rceil\le r$.  Partitioning it into at most $r$
independent sets, adding empty classes if necessary, and completing every
class to a basis gives $r$ invertible matrices for which no vector has all
$r$ images nowhere zero.  Indeed, the coordinate forms of each matrix
constitute a basis, and the original blocking forms occur among their union.
This proves the first assertion.

For $q=2^m$, the embedded subgeometry $\PG(m,2)$ is a blocker of density
$d_m$, proving the second.  Since
\[
 d_m=(1+o(1))\frac{2^{m+1}}{m+1},
\]
we have $m=(1+o(1))\log_2d_m$ and hence
\[
 2^m=\left(\frac12+o(1)\right)d_m\log_2d_m.
\]
This proves the estimate along $r=d_m$.  For arbitrary $r$, choose the
largest $m$ with $d_m\le r$.  The obstruction for $d_m$ matrices may be
padded with repeated matrices, so it is also an obstruction for $r$
matrices.  Moreover,
$r<d_{m+1}=(2+o(1))d_m$ and $m=(1+o(1))\log_2r$.  Therefore
\[
 2^m\ge\left(\frac14-o(1)\right)r\log_2r,
\]
which proves the uniform estimate.
\end{proof}

\section{Open problems}\label{sec:questions}

Theorem~\ref{thm:simultaneous} solves the simultaneous-mapping problem when
the extension degree is at least the number of mappings.  At the other
extreme, Nagy, Pach, and Tomon show that, for each fixed number of mappings,
a sufficiently large field always works.  Proposition~\ref{prop:large-field-lower}
shows that this large-field threshold grows at least on the order of
$r\log r$.

\begin{question}\label{ques:large-field}
Determine the order of growth of $Q(r)$.
\end{question}

Equation~\eqref{eq:rmax-blocking} identifies $\rmax(q)+1$ with a minimum
blocker density.  For every fixed $r$, the theorem of Nagy, Pach, and Tomon
\cite[Theorem~1.7]{NagyPachTomon} implies $\rmax(q)\ge r$ for all sufficiently
large prime powers $q$: partition a cover into at most $r$ independent
families, complete them to bases, and apply their matrix theorem.  Hence,
for fixed $k$,
\[
 \rmax(p^k)\longrightarrow\infty\qquad(p\longrightarrow\infty),
\]
even though Proposition~\ref{prop:subgeometry} gives
$\rmax(q)=k$ for $q\in\{2,3,4,8\}$.

\begin{question}\label{ques:rmax}
Determine $\rmax(q)$, or its order of growth, in terms of the characteristic
and the extension degree.
\end{question}

The exact rank-three formula and
Corollary~\ref{cor:rank4-consequences} prove the
characteristic analogue of Moghaddamzadeh's conjecture in ranks at most four
over proper extension fields.  The general case remains open.

\begin{question}\label{ques:characteristic}
Suppose that $M$ is loopless and representable over $\F_{p^k}$, with
$p>2a(M)-1$.  Must $\chi_M(p^k)$ be positive?
\end{question}

Products of bases solve the matrix problem by producing a monomial with
bounded exponents.  A cover by independent sets is exactly the matroidal
hypothesis needed for this argument.  Spanning blockers are exactly the
represented configurations for which $\chi_M(q)=0$.  Thus the algebra proves
existence, and projective geometry describes failure.

\section*{Acknowledgments}

The author thanks M.~J.~Moghaddamzadeh for sharing his conjecture and for
helpful discussions.


\small
\begin{thebibliography}{99}

\bibitem{Alon1999}
N.~Alon,
\newblock Combinatorial Nullstellensatz,
\newblock \emph{Combinatorics, Probability and Computing} 8 (1999),
nos.~1--2, 7--29,
\newblock
\href{https://doi.org/10.1017/S0963548398003411}
{doi:10.1017/S0963548398003411}.

\bibitem{AT1989}
N.~Alon and M.~Tarsi,
\newblock A nowhere-zero point in linear mappings,
\newblock \emph{Combinatorica} 9 (1989), no.~4, 393--395,
\newblock
\href{https://doi.org/10.1007/BF02125351}
{doi:10.1007/BF02125351}.

\bibitem{Athanasiadis}
C.~A.~Athanasiadis,
\newblock Characteristic polynomials of subspace arrangements and finite
fields,
\newblock \emph{Advances in Mathematics} 122 (1996), no.~2, 193--233,
\newblock
\href{https://doi.org/10.1006/aima.1996.0059}
{doi:10.1006/aima.1996.0059}.

\bibitem{Blokhuis}
A.~Blokhuis,
\newblock On the size of a blocking set in $\PG(2,p)$,
\newblock \emph{Combinatorica} 14 (1994), no.~1, 111--114,
\newblock
\href{https://doi.org/10.1007/BF01305953}
{doi:10.1007/BF01305953}.

\bibitem{Bruen}
A.~A.~Bruen,
\newblock Blocking sets in finite projective planes,
\newblock \emph{SIAM Journal on Applied Mathematics} 21 (1971), no.~3,
380--392,
\newblock
\href{https://doi.org/10.1137/0121041}
{doi:10.1137/0121041}.

\bibitem{Edmonds}
J.~Edmonds,
\newblock Minimum partition of a matroid into independent subsets,
\newblock \emph{Journal of Research of the National Bureau of Standards
Section B} 69B (1965), 67--72.

\bibitem{NagyPach}
J.~Nagy and P.~P.~Pach,
\newblock The Alon--Jaeger--Tarsi conjecture via group ring identities,
\newblock \emph{Journal of the European Mathematical Society}, online
first (2025),
\newblock
\href{https://doi.org/10.4171/JEMS/1640}{doi:10.4171/JEMS/1640}.

\bibitem{NagyPachTomon}
J.~Nagy, P.~P.~Pach, and I.~Tomon,
\newblock Hyperplane covers of finite spaces and applications,
\newblock \emph{Transactions of the American Mathematical Society} 379
(2026), no.~1, 137--156,
\newblock
\href{https://doi.org/10.1090/tran/9483}{doi:10.1090/tran/9483}.

\bibitem{Oxley}
J.~Oxley,
\newblock \emph{Matroid Theory},
\newblock second edition, Oxford University Press, Oxford, 2011.

\bibitem{Zaslavsky}
T.~Zaslavsky,
\newblock The M\"obius function and the characteristic polynomial,
\newblock in N.~White (ed.), \emph{Combinatorial Geometries},
Encyclopedia of Mathematics and its Applications, vol.~29,
Cambridge University Press, Cambridge, 1987, pp.~114--138.

\end{thebibliography}
\end{document}